\newcommand{\const}{\operatorname{const.}}
\newcommand{\diam}{\operatorname{diam}}
\newcommand{\dvol}{\operatorname{dvol}}
\newcommand{\Hess}{\operatorname{Hess}}
\newcommand{\Ker}{\operatorname{Ker}}
\newcommand{\R}{{\mathbb R}}
\newcommand{\Ric}{\operatorname{Ric}}
\newcommand{\Rm}{\operatorname{Rm}}
\newcommand{\spann}{\operatorname{span}}
\newcommand{\Tr}{\operatorname{Tr}}
\newcommand{\vol}{\operatorname{vol}}
\newcommand{\Vol}{\operatorname{Vol}}
\newcommand{\Z}{{\mathbb Z}}
\numberwithin{equation}{section}
\theoremstyle{plain}
\newtheorem{lemma}[equation]{Lemma}
\newtheorem{theorem}[equation]{Theorem}
\newtheorem{corollary}[equation]{Corollary}
\theoremstyle{remark}
\newtheorem{remark}[equation]{Remark}
\newtheorem{example}[equation]{Example}
\begin{document}

\title[On scalar curvature lower bounds and scalar curvature measure]{On scalar curvature lower bounds and scalar curvature measure}

\author{John Lott}
\address{Department of Mathematics\\
University of California, Berkeley\\
Berkeley, CA  94720-3840\\
USA} \email{lott@berkeley.edu}

\thanks{Research partially supported by NSF grant
DMS-1810700}
\date{July 14, 2022}
\begin{abstract}
  We relate the (non)existence of lower scalar curvature bounds to the
  existence of certain distance-decreasing maps.  We also give a
  sufficient condition for the existence of a limiting scalar curvature
  measure in the backward limit of a Ricci flow solution.
\end{abstract}

\maketitle

\section{Introduction}

In \cite{Lott (2021)} we proved the following result, in which a lower bound on
scalar curvature gives a restriction on the existence of distance-nonincreasing
maps of nonzero degree. Let $R$ denote scalar curvature and let
$H$ denote mean curvature.

\begin{theorem} \label{1.1}
  Let $N$ and $M$ be compact connected Riemannian manifolds-with-boundary
  of the same even dimension.
  Let $f \: : \: (N, \partial N) \rightarrow
  (M, \partial M)$ be a smooth spin map and let
  $\partial f \: : \: \partial N \rightarrow \partial M$ denote the
  restriction to the boundary.
  Suppose that
\begin{itemize}
  \item $f$ is $\Lambda^2$-nonincreasing and $\partial f$ is
  distance-nonincreasing,
  \item $M$ has nonnegative curvature operator and
    $\partial M$ has nonnegative second fundamental form,
    \item 
    $R_N \ge f^* R_M$ and
      $H_{\partial N} \ge (\partial f)^* H_{\partial M}$,
      \item 
        $M$ has nonzero Euler characteristic and
        \item 
          $f$ has nonzero degree.
\end{itemize}
Then $R_N = f^* R_M$ and $H_{\partial N} = (\partial f)^* H_{\partial M}$.

Furthermore,
  \begin{itemize}
  \item    If $0 < \Ric_M < \frac12 R_M g_M$ then $f$ is a Riemannian covering
    map.
\item If $\Ric_M > 0$ and $f$ is distance-nonincreasing then $f$ is a
  Riemannian covering map.
\item If $M$ is flat then $N$ is Ricci-flat.
  \end{itemize}
\end{theorem}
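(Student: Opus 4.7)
The plan is to use a twisted Dirac operator argument in the spirit of Llarull, extended to manifolds with boundary through Atiyah--Patodi--Singer-type boundary conditions. Since $M$ is even-dimensional with $\chi(M) \neq 0$ and has nonnegative curvature operator, I would construct a twisting bundle $E \to M$ (for instance built from $\Lambda^{\operatorname{even}} TM$ or a half-spinor bundle of $TM$, as in Llarull) so that the index of the associated twisted Dirac operator on $M$ captures $\chi(M)$. Pulling back via $f$ gives $f^*E \to N$ with pulled-back connection, and, $f$ being a spin map, a twisted Dirac operator $D_{f^*E}$ on $N$ whose topological index is proportional to $\deg(f)\cdot \chi(M)\neq 0$, modulo a boundary correction to be controlled.

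The heart of the argument is the twisted Lichnerowicz--Weitzenb\"ock identity
\[
D_{f^*E}^2 \;=\; \nabla^*\nabla \;+\; \frac{R_N}{4}\,\Id \;+\; \mathcal{R}^{f^*E}.
\]
The Llarull-type pointwise estimate, leveraging both the nonnegative curvature operator on $M$ and the $\Lambda^2$-nonincreasing hypothesis on $f$, yields $\mathcal{R}^{f^*E} \ge -\tfrac{1}{4} f^*R_M\,\Id$. Combined with $R_N \ge f^*R_M$, the zeroth-order term in the integrated Weitzenb\"ock identity is nonnegative. Imposing APS-type boundary conditions adapted to the spin map $\partial f$, the boundary contribution to $\|D_{f^*E}\psi\|^2$ reduces to terms involving $H_{\partial N} - (\partial f)^*H_{\partial M}$ and the second fundamental form of $\partial M$, both nonnegative by hypothesis. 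A nonzero index then produces a nontrivial twisted harmonic spinor $\psi$, and tightness in the resulting inequality forces $\nabla\psi \equiv 0$, $R_N = f^*R_M$, and $H_{\partial N} = (\partial f)^*H_{\partial M}$, establishing the first conclusion.

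For the additional rigidity statements, the strategy is to exploit the parallel spinor $\psi$ and the equality case of the Llarull pointwise estimate. Under $0 < \Ric_M < \tfrac{1}{2} R_M g_M$, Llarull's linear-algebra lemma shows that the equality case at each point where $\psi\neq 0$ forces $df$ to be an isometry on $T_xN$; parallelism of $\psi$ propagates $\psi\neq 0$ everywhere, so $f$ is a local isometry and hence a Riemannian covering. The case $\Ric_M > 0$ with $f$ distance-nonincreasing runs analogously, the distance-nonincreasing bound replacing the narrower Ricci pinching in the pointwise step to pin down the eigenvalues of $df^*df$. Finally, if $M$ is flat, we run the same scheme with the untwisted Dirac operator: equality produces a parallel spinor on $N$, which via the standard curvature-annihilation identity forces $\Ric_N = 0$.

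The main obstacle I anticipate is the boundary analysis: the APS-type boundary condition must be compatible with the Clifford and bundle structures on both $\partial N$ and $\partial M$ so that the pulled-back bundle yields a sensible boundary Dirac operator, and one must check that $H_{\partial N} \ge (\partial f)^*H_{\partial M}$ combined with the nonnegative second fundamental form of $\partial M$ produces a boundary contribution of the right sign in the Bochner identity. A secondary difficulty is passing from the pointwise Llarull equality plus $\nabla\psi = 0$ to the global conclusion that $f$ is a Riemannian covering; this requires ruling out zeros of $\psi$ and verifying that an everywhere local isometry between compact manifolds of the same dimension is a covering, the former handled by parallelism and the latter standard.
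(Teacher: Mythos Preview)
The paper does not contain a proof of Theorem~1.1. It is quoted in the introduction as a result established in the author's earlier paper \cite{Lott (2021)} (``Index theory for scalar curvature on manifolds with boundary''), following Llarull \cite{Llarull (1998)} and Goette--Semmelmann \cite{Goette-Semmelmann (2002)}, and serves here only as motivation for the converse Theorem~1.2. Consequently there is no ``paper's own proof'' of this statement to compare your proposal against.

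That said, your sketch is consistent with the approach one expects in \cite{Lott (2021)}: a twisted Dirac operator on $N$ with twisting bundle pulled back from $M$, the Lichnerowicz formula combined with a Llarull--Goette--Semmelmann curvature-term estimate using the nonnegative curvature operator and the $\Lambda^2$-nonincreasing hypothesis, a boundary term controlled via the mean curvature and second fundamental form hypotheses, and an index computation yielding $\deg(f)\cdot\chi(M)\neq 0$ to force a harmonic spinor. The rigidity cases you outline also match the standard pattern. If you want to verify the details, particularly the precise boundary condition and how the boundary term in the integrated Weitzenb\"ock formula is handled, you should consult \cite{Lott (2021)} rather than the present paper.
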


In particular, the lower scalar curvature bound $R_N \ge f^* R_M$ means that
it is impossible for $f$ and $\partial f$
to be distance-decreasing (i.e. have Lipschitz constant less than one),
with $f$ having nonzero degree,
and to also have $H_{\partial N} > (\partial f)^* H_{\partial M}$.
Theorem \ref{1.1} follows earlier work by Llarull \cite{Llarull (1998)} and
Goette-Semmelmann \cite{Goette-Semmelmann (2002)}; 
we refer to \cite{Lott (2021)}
for background and generalizations.  The first main
result of the present paper is a converse
and shows that the lack of a lower bound on the
scalar curvature implies that such distance-decreasing maps do exist.

\begin{theorem} \label{1.2}
Given $n > 1$ and $K \in \R$,
let $Z$ be an $n$-dimensional Riemannian manifold
and let $z \in Z$ be a point where the
scalar curvature is $R_z < n(n-1)K$.  Then for any neighborhood $U$ of $z$,
there are
\begin{enumerate}
\item A codimension-zero compact submanifold-with-boundary $N \subset U$
containing $z$ that is diffeomorphic to a ball,
\item A codimension-zero compact submanifold-with-boundary $M$ in the
  $n$-dimensional model space
  of constant curvature $K$, diffeomorphic to a ball,
\item A smooth map $f : (N, \partial N) \rightarrow
  (M, \partial M)$ of nonzero degree
  so that $f$ and $\partial f$ are distance-decreasing,
  and the mean curvatures satisfy
  $H_{\partial N} > (\partial f)^* H_{\partial M}$, and
\item Numbers $\delta,l > 0$ so that for all $n \in \partial N$ and
  $t \in [0, l)$, one has
  $f(\exp_n (t \nu_{\partial N})) = \exp_{f(n)} ((1-\delta) t
  \nu_{\partial M})$,
  where $\nu_{\partial N}$ and $\nu_{\partial M}$ are
  the inward unit normals to $\partial N$ and $\partial M$, respectively.
\end{enumerate}

If $K \le 0$ then we can take $M$ to be strictly convex.
\end{theorem}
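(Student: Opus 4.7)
The plan is to build $N$ as a slightly perturbed small geodesic ball around $z$ in $Z$, $M$ as a slightly smaller geodesic ball $B^K_{\rho_M}$ in the model space, and $f$ as a normal-coordinate identification adjusted to match the Fermi-coordinate formula in condition (4) on a collar of $\partial N$. Fix normal coordinates at $z$ and at a chosen point $m$ of the model space, identified by a linear isometry $L:T_zZ\to T_m$. For small $\rho>0$ I take $\partial N=\{\exp_z((\rho+\phi(v))v):v\in S^{n-1}\}$ for a function $\phi:S^{n-1}\to\R$ of order $\rho^3$, and $M=B^K_{\rho_M}$ with $\rho_M=\rho(1-\eta)$ for a small $\eta>0$ of order $\rho^2$.

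The scalar curvature hypothesis enters through expansions in normal coordinates. The boundary mean curvature of $\partial B_\rho(z)$ is $H(v)=(n-1)/\rho-\tfrac{\rho}{3}\Ric_Z(v,v)+O(\rho^2)$, with spherical average $(n-1)/\rho-\rho R_z/(3n)+O(\rho^2)$, while $\partial B^K_\rho$ has constant mean curvature $(n-1)/\rho-(n-1)K\rho/3+O(\rho^3)$. Thus $R_z<n(n-1)K$ yields an averaged mean-curvature surplus on the $Z$-side of order $\rho(n(n-1)K-R_z)/(3n)>0$. I would choose $\phi$ to solve the linear elliptic equation $L_\rho\phi=\tfrac{\rho}{3}(\Ric_Z(v,v)-R_z/n)$ on $S^{n-1}$, where $L_\rho$ is the linearization of mean curvature for radial graphs over $\partial B_\rho(z)$, invertible on spherical harmonics of degree $\ge 2$ (its kernel being the translations). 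The perturbed hypersurface $\partial N$ then has essentially constant mean curvature, and comparison with $\partial M$ gives the pointwise inequality $H_{\partial N}>H_{\partial M}$ with margin of order $\rho$. With $\eta$ of order $\rho^2$, the normal-coordinate identification $\partial f:\partial N\to\partial M$ is Lipschitz strictly less than $1$ with margin of order $\eta\rho^2$.

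Next I define $f$ on a collar of $\partial N$ of width $l$ by the formula in (4) with $\delta>0$ small, and extend smoothly into the interior, for instance by interpolating to the radial scaling $\exp_z(sv)\mapsto\exp_m((1-\delta)sL(v))$. Distance-decreasing on the collar reduces to the block-diagonal condition $\delta(2-\delta)\,dt^2+(g_t^N-(\partial f)^*g^M_{(1-\delta)t})\ge 0$, which holds for $l$ small since the tangential form is strictly positive at $t=0$; in the interior, smallness of $\rho$ and the near-Euclideanness of both metrics in normal coordinates yield Lipschitz $<1$ directly. The map $f$ has degree $\pm 1$ since $\partial f$ is a diffeomorphism. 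For the final clause, note that a small geodesic ball in a model space with $K\le 0$ has positive-definite second fundamental form and is therefore strictly convex.

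The main obstacle is that scalar curvature at $z$ controls only the trace of the Ricci tensor, whereas the strict Lipschitz and pointwise mean curvature conditions see the full directional behavior of $\Ric_Z$ and of the sectional curvatures. The radial perturbation $\phi$ handles the directional variation of mean curvature; any residual traceless anisotropy in the comparison $g_{\partial N}-(\partial f)^*g_{\partial M}$ arising from non-uniform sectional curvatures may be absorbed by a small tangential twist of $\partial f$ (solving a conformal-Killing-type equation on $S^{n-1}$) or by a matching shape modification of $\partial M$ within the model space. What ties everything together is the scalar curvature gap $n(n-1)K-R_z>0$, which supplies the dominant positive term allowing all required inequalities to hold simultaneously once $\rho$ is small enough.
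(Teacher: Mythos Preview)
Your approach has a genuine gap. The scalar curvature hypothesis controls only the trace of $\Ric_z$, but the two pointwise inequalities you need are governed by directional quantities: $H_{\partial N}(v)$ sees $\Ric_Z(v,v)$, and the tangential metric on $\partial N$ at $v$ in direction $w$ sees the sectional curvature $K_Z(v,w)$. Your radial perturbation $\phi$ is chosen to equalize mean curvature, but it then feeds back into the tangential metric and can make the distance-decreasing condition fail by a large margin. Concretely, take $n=3$, $K=1$, and sectional curvatures $K_{12}=K_{13}=100$, $K_{23}=-199$ at $z$, so $R_z=2<6=n(n-1)K$. Solving your equation for $\phi$ (a degree-two harmonic) gives $\phi(e_2)\approx -8.3\,\rho^3$, and the tangential metric on the perturbed $\partial N$ at $v=e_2$ in the $e_1$-direction is roughly $\rho^2(1-50\rho^2)$, while on $\partial M$ it is $\rho^2(1-2\eta-\rho^2/3)$. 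Distance-decreasing there forces $\eta>24\rho^2$, but your mean curvature inequality forces $\eta<\tfrac{2}{9}\rho^2$. The suggested remedies---a tangential twist solving a conformal-Killing-type equation, or reshaping $\partial M$---are not shown to close this $O(1)$ gap; on $S^{n-1}$ with $n\ge4$ the conformal Killing operator is not surjective onto traceless symmetric $2$-tensors, and in any case you give no argument that such modifications preserve the mean curvature inequality and the collar condition (4) simultaneously.

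The paper's proof is organized precisely to avoid this tension. It proceeds by induction on $n$: one first finds (Lemma 2.4) a small minimal hypersurface $V$ through $z$ with unit normal $v$ chosen, via a case analysis on the eigenvalues of $\widehat{\Ric}=\Ric-(n-1)Kg$, so that \emph{both} $\Ric(v,v)<(n-1)K$ and the induced scalar curvature of $V$ satisfies $R'_z<(n-1)(n-2)K$. The induction hypothesis then supplies $(N',M',f')$ in dimension $n-1$; one thickens to a cylinder $[-\epsilon,\epsilon]\times N'$, where the top/bottom mean curvatures are controlled by $\Ric(v,v)$ and the side by $H_{\partial N'}$. The substantial technical work is smoothing the codimension-two corner while keeping all inequalities strict, done through a sequence of explicit tube constructions and a short mean curvature flow. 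The point is that the inductive step separates the two directional obstructions (normal Ricci versus tangential scalar curvature) rather than trying to absorb all sectional anisotropy at once, which is what your geodesic-ball approach attempts.
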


Together, Theorems \ref{1.1} and \ref{1.2} essentially
give a metric characterization of
lower scalar curvature bounds.
While the geometric meaning of
scalar curvature may be hard to understand, the metric
characterization is in terms of mean curvature, which is more tractable.

The proof of Theorem \ref{1.2} is by induction on $n$, as in the proof of a
related result by Gromov in \cite{Gromov (2014)}.
Item (4) in the conclusion of the theorem
is just for technical convenience, in order to simplify the induction
argument.  In the induction step, it is fairly easily to obtain
cylindrical regions that
satisfy the conclusions of the theorem,
along the lines of \cite{Gromov (2014)}, but have codimension-two
singularities.  The main technical issue is to smooth the singularities
while simultaneously
maintaining the distance-decreasing property and the inequality on
mean curvatures. Given Theorem \ref{1.2}, one can somewhat
simplify Gromov's proof of the preservation of lower scalar curvature bounds
under $C^0$-limits of smooth Riemannian metrics \cite{Gromov (2014)}.

The second main result of the paper is about the existence of a
limiting scalar curvature measure, as $t \rightarrow 0$,
for a Ricci flow coming out of a
metric space.  If there is going to be a finite limiting measure then
by looking at the total scalar curvature along the flow, one sees that
the finiteness
of $\int_0^T \int_M (R^2 - 2|\Ric|^2) \dvol_{g(t)} dt$ is a necessary
condition (equation (\ref{3.25})). The next theorem essentially says that it is
also sufficient.

\begin{theorem} \label{1.3}
  Let $(M, g(t))$, $t \in (0, T]$, be a Ricci flow solution on a compact
    $n$-dimensional manifold $M$ satisfying
\begin{enumerate}
\item    $|\Rm_{g(t)}| < \frac{A}{t}$ for some $A < \infty$ and all $t$, 
  \item $\Ric_{g(t)} \ge E g(t)$ for some $E > - \infty$ and all $t$, and
\item $R^2 - 2|\Ric|^2 \in L^1((0, T] \times M; dt \dvol_{g(t)})$.
\end{enumerate}
Then there is a limit $\lim_{t \rightarrow 0} R_{g(t)} \dvol_{g(t)} =
\mu_0$ in the weak-$\star$ topology.
  \end{theorem}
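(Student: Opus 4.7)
The plan is to test the measure $R_{g(t)} \dvol_{g(t)}$ against smooth functions on $M$ and exploit the Ricci flow evolution identity
\begin{equation*}
\partial_t (R \dvol_{g(t)}) = (\Delta R + 2|\Ric|^2 - R^2) \dvol_{g(t)},
\end{equation*}
which follows by combining $\partial_t R = \Delta R + 2|\Ric|^2$ with $\partial_t \dvol_{g(t)} = -R \dvol_{g(t)}$. For a test function $\phi \in C^\infty(M)$, integrating by parts on the closed manifold $M$ yields
\begin{equation*}
\frac{d}{dt} \int_M \phi R \dvol_{g(t)} = \int_M R \Delta \phi \dvol_{g(t)} + \int_M \phi (2|\Ric|^2 - R^2) \dvol_{g(t)}.
\end{equation*}
Hypothesis (3) ensures that, after integrating in $t$, the second term on the right is absolutely convergent and in particular Cauchy as both endpoints approach zero.

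Everything therefore reduces to establishing a uniform $L^1$-bound $\int_M |R_{g(t)}| \dvol_{g(t)} \le C$ on $(0, T]$. Setting $\phi \equiv 1$ in the identity above and using hypothesis (3) shows that $t \mapsto \int_M R \dvol_{g(t)}$ is Cauchy as $t \to 0$, and in particular bounded. Hypothesis (2) gives the pointwise inequality $R_{g(t)} \ge nE$, so the negative part $R_-$ is bounded by $\max(0, -nE)$. Since $\tfrac{d}{dt} \vol(M, g(t)) = -\int_M R \dvol_{g(t)}$ is now bounded, $\vol(M, g(t))$ is Lipschitz on $(0, T]$ and hence bounded. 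Combining these ingredients gives $\int_M |R| \dvol \le \int_M R \dvol + 2 \max(0, -nE) \cdot \vol(M, g(t))$, uniformly bounded.

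With this in hand, $\int_{t_1}^{t_2} \int_M |R \Delta \phi| \dvol_{g(t)} \, dt \le \|\Delta \phi\|_\infty \, C \, (t_2 - t_1)$, so $t \mapsto \int_M \phi R \dvol_{g(t)}$ is Cauchy as $t \to 0$ for every $\phi \in C^\infty(M)$. A standard density argument on $C(M)$, using the uniform total-variation bound, upgrades the convergence to all $\phi \in C(M)$, and the Riesz representation theorem produces the signed Radon measure $\mu_0$. The main technical step in this plan is the $L^1$ control on $R_{g(t)}$: the positive part is controlled via the limit of total scalar curvature produced by hypothesis (3), whereas the negative part genuinely requires the Ricci lower bound (2); hypothesis (1) serves to guarantee smoothness of the flow on $(0, T]$ so that these evolution identities may be applied without further justification.
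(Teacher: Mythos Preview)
Your argument contains a genuine gap at the step
\[
\int_{t_1}^{t_2} \int_M |R \, \Delta \phi| \, \dvol_{g(t)} \, dt \le \|\Delta \phi\|_\infty \, C \, (t_2 - t_1).
\]
The Laplacian here is $\Delta_{g(t)}$, which depends on the time-varying metric, so the quantity $\|\Delta_{g(t)}\phi\|_\infty$ is itself a function of $t$, and under the stated hypotheses there is no uniform bound on it as $t \to 0$. The Ricci lower bound (2) gives a one-sided inequality $g(s) \ge c\,g(T)$, which controls $g^{ij}$ and hence $|\nabla\phi|_{g(t)}$, but it does not control $\Gamma^k_{ij}(t)$ or the full Hessian. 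The bound $|\Rm|<A/t$ only yields $|\partial_t g| \le (C/t)|g|$, which permits the metric coefficients and their derivatives to blow up polynomially in $1/t$; indeed the whole point of the theorem is to handle flows coming out of genuinely singular metric spaces, where $g(t)$ has no smooth limit at $t=0$. (Your $L^1$-bound on $R$ via $\phi\equiv 1$ is fine precisely because $\Delta_{g(t)}1=0$ for every $t$; for nonconstant $\phi$ this fails.) The author in fact remarks in the introduction that fixing the test function ``turns out to not be useful.''

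The paper's remedy is to let the test function evolve by the backward heat equation $\partial_t F = -\Delta_{g(t)} F$ with terminal data $F(\widehat t)=f$. Then the two Laplacian terms in your displayed identity cancel exactly, leaving
\[
\frac{d}{dt}\int_M F R \,\dvol_{g(t)} = -\int_M F\,(R^2 - 2|\Ric|^2)\,\dvol_{g(t)},
\]
which is controlled directly by hypothesis (3) together with the maximum principle for $F$. The price is that one must then show $\|F(s)-f\|_\infty \to 0$ as $\widehat t \to 0$, uniformly for $s\in(0,\widehat t/2]$; this is where hypothesis (1) is genuinely used, via the Gaussian heat-kernel bound $G(x,t;y,s) < C t^{-n/2}\exp(-d_s(x,y)^2/Ct)$ of Bamler--Cabezas-Rivas--Wilking, which requires exactly the $|\Rm|<A/t$ assumption. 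Your closing remark that (1) ``serves to guarantee smoothness'' misidentifies its role: smoothness on $(0,T]$ is automatic for Ricci flow on a closed manifold, and (1) is instead the quantitative input that drives the heat-kernel estimate.
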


One's first approach to proving Theorem \ref{1.3}
might be to fix a test function
$f$ and consider the time evolution of $\int_M f  R_{g(t)} \dvol_{g(t)}$.
This turns out to not be useful.  Instead we let $f$ evolve by a backward
heat equation and use heat kernel estimates from
\cite{Bamler-Cabezas-Rivas-Wilking (2019)}.

Using Theorem \ref{1.3}, we show the existence of a
subsequential limiting scalar curvature
measure on a class of Ricci limit spaces.
Recall that a Riemannian manifold has
$2$-nonnegative curvature operator if at each point, the sum of the
two lowest eigenvalues of the curvature operator is nonnegative.

\begin{theorem} \label{1.4}
  Given $D, \widehat{A} < \infty$ and $v_0 > 0$, let
  $\{(M_i, g_i)\}_{i=1}^\infty$ be a sequence of compact
  $n$-dimensional Riemannian manifolds, $n \ge 4$, such that
  \begin{enumerate}
  \item $\diam(M_i, g_i) \le D$,
  \item $\vol(M_i, g_i) \ge v_0$,
  \item $(M_i, g_i)$ has $2$-nonnegative curvature operator, and
    \item $\int_{M_i} R_{g_i} \: \dvol_{g_i} \le \widehat{A}$.
  \end{enumerate}
  Then after passing to a subsequence, there is a Gromov-Hausdorff limit
  $(X_\infty, d_\infty)$ with a measure $\mu_0$, along with
  continuous Gromov-Hausdorff approximations $\eta_i : M_i \rightarrow
  X_\infty$, such that
  \begin{enumerate}
  \item $\lim_{i \rightarrow \infty} \left( \eta_i \right)_*
    \left( R_{g_i} \dvol_{g_i} \right) \stackrel{weak-\star}{=} \mu_0$, and
  \item There is a smooth $1$-parameter family of Riemannian
    metrics $\{g(t)\}_{t \in (0, T]}$ on $X_\infty$, with
      $2$-nonnegative curvature operator, so that
      $\lim_{t \rightarrow 0} (X_\infty, g(t)) \stackrel{GH}{=}
      (X_\infty, d_\infty)$ and
      $\lim_{t \rightarrow 0} R_{g(t)} \dvol_{g(t)}
      \stackrel{weak-\star}{=} \mu_0$.
  \end{enumerate}
\end{theorem}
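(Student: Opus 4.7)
The plan is to regularize each $(M_i, g_i)$ by a short-time Ricci flow using the work of Bamler--Cabezas-Rivas--Wilking, pass to a limit to obtain a smooth Ricci flow emanating from the Gromov--Hausdorff limit $(X_\infty, d_\infty)$, and then apply Theorem \ref{1.3} to that limit flow. First I would invoke \cite{Bamler-Cabezas-Rivas-Wilking (2019)} to obtain a uniform existence time $T > 0$ and smooth Ricci flows $g_i(t)$ on $M_i$ for $t \in (0,T]$ with $g_i(t) \to g_i$ as $t \to 0$, satisfying $|\Rm_{g_i(t)}| \le A/t$ and preserving 2-nonnegative curvature operator; in particular $\Ric_{g_i(t)} \ge 0$, so $R_{g_i(t)} \ge 0$.

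Second, I would derive uniform $L^2$-integrability of scalar curvature from hypothesis~(4). Under Ricci flow one has $\frac{d}{dt}\int_M R \, \dvol = \int_M (2|\Ric|^2 - R^2) \, \dvol$, and $\Ric \ge 0$ gives the elementary eigenvalue bounds $R^2/n \le |\Ric|^2 \le R^2$, which together yield
\begin{equation*}
  \frac{n-2}{n} \int_{M_i} R_{g_i(t)}^2 \, \dvol_{g_i(t)}
  \le -\frac{d}{dt} \int_{M_i} R_{g_i(t)} \, \dvol_{g_i(t)}.
\end{equation*}
Integrating over $[0, T]$ and using $R_{g_i(T)} \ge 0$ produces $\int_0^T \int_{M_i} R_{g_i(t)}^2 \, \dvol_{g_i(t)} \, dt \le \frac{n}{n-2}\widehat{A}$, uniformly in $i$. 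Since $|R^2 - 2|\Ric|^2| \le 3R^2$ when $\Ric \ge 0$, this supplies the uniform $L^1$-control required by hypothesis~(3) of Theorem~\ref{1.3}.

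Third, after passing to a subsequence I would apply Hamilton's compactness theorem for Ricci flows, with $\kappa$-noncollapsing coming from the uniform lower volume bound together with $\Ric \ge 0$ (via Bishop--Gromov), to obtain a smooth limit Ricci flow $(X_\infty, g_\infty(t))$ on $(0, T]$ with 2-nonnegative curvature operator and continuous Gromov--Hausdorff approximations $\eta_i : M_i \to X_\infty$, with both $(M_i, g_i) \to (X_\infty, d_\infty)$ and $(X_\infty, g_\infty(t)) \to (X_\infty, d_\infty)$ in the Gromov--Hausdorff sense as $t \to 0$. The uniform $L^1$-bound above descends to the limit flow via smooth convergence on compact subsets of $(0, T] \times X_\infty$ combined with Fatou's lemma, so Theorem~\ref{1.3} applies and produces the weak-$\star$ limit $\mu_0 = \lim_{t \to 0} R_{g_\infty(t)} \dvol_{g_\infty(t)}$, which is conclusion~(2).

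The main obstacle is conclusion~(1), the identification of $\mu_0$ with $\lim_i (\eta_i)_*(R_{g_i} \dvol_{g_i})$, because it requires interchanging the limits $i \to \infty$ and $t \to 0$. My approach would be to revisit the backward heat-kernel argument from the proof of Theorem~\ref{1.3} and apply it quantitatively on each $(M_i, g_i(\cdot))$: for a fixed test function $\varphi \in C(X_\infty)$, the goal is a Cauchy-type estimate
\begin{equation*}
  \Bigl| \int_{M_i} (\eta_i^* \varphi) R_{g_i(t_2)} \dvol_{g_i(t_2)} - \int_{M_i} (\eta_i^* \varphi) R_{g_i(t_1)} \dvol_{g_i(t_1)} \Bigr| \longrightarrow 0
\end{equation*}
as $t_1, t_2 \to 0$, with rate depending only on $\widehat{A}, A, D, v_0$ and $\|\varphi\|_{C^0}$. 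Combining this uniform estimate with smooth Ricci-flow convergence on $(0, T]$ and choosing a diagonal sequence $t_i \to 0$ sufficiently slowly then gives $(\eta_i)_*(R_{g_i} \dvol_{g_i}) \to \mu_0$ weakly, completing the proof.
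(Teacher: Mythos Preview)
Your overall architecture matches the paper's: BCW regularization, Hamilton compactness to get a limit flow, apply Theorem~\ref{1.3} to the limit, and then argue that the estimates behind Theorem~\ref{1.3} are uniform along the sequence to swap the $i\to\infty$ and $t\to 0$ limits. The paper does exactly this, with $\eta_i=\phi_i^{-1}$ for diffeomorphisms $\phi_i:X_\infty\to M_i$ coming from compactness.

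There is, however, a genuine error in your second step. The displayed inequality
\[
\frac{n-2}{n}\int_{M_i} R^2 \;\le\; -\frac{d}{dt}\int_{M_i} R \;=\; \int_{M_i}\bigl(R^2-2|\Ric|^2\bigr)
\]
goes the wrong way. From $\Ric\ge 0$ one only has the pointwise bounds $R^2/n \le |\Ric|^2 \le R^2$, which give
\[
-R^2 \;\le\; R^2-2|\Ric|^2 \;\le\; \frac{n-2}{n}\,R^2,
\]
so at best $-\tfrac{d}{dt}\int R \le \tfrac{n-2}{n}\int R^2$, the reverse of what you wrote. The left end $-R^2$ is attained e.g.\ when $\Ric$ has eigenvalues $(1,0,\dots,0)$, and no $L^2$ bound on $R$ follows from $\Ric\ge 0$ alone. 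This is not a cosmetic slip: your route to the $L^1$ bound on $R^2-2|\Ric|^2$ goes through an $L^2$ bound on $R$ that you cannot obtain this way.

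What actually works, and what the paper does, is to use the $2$-nonnegative curvature operator hypothesis (not merely $\Ric\ge 0$) to prove the pointwise sign $R^2-2|\Ric|^2\ge 0$ (Lemma~\ref{4.4}). Then integrating the evolution of total scalar curvature from $0$ to $T$ gives directly
\[
\int_0^T\!\!\int_{M_i}\bigl|R^2-2|\Ric|^2\bigr|\,\dvol\,dt
=\int_0^T\!\!\int_{M_i}(R^2-2|\Ric|^2)\,\dvol\,dt
=\int_{M_i}R_{g_i}\,\dvol_{g_i}-\int_{M_i}R_{g_i(T)}\,\dvol_{g_i(T)}
\le \widehat{A},
\]
with no detour through $\int R^2$. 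This also explains why the theorem is stated for $2$-nonnegative curvature operator rather than just nonnegative Ricci: that hypothesis is exactly what makes $R^2-2|\Ric|^2$ a signed integrand. Once you replace your second step by this argument, the rest of your outline (compactness, Theorem~\ref{1.3} on the limit flow, and the uniform-in-$i$ Cauchy estimate for conclusion~(1)) is correct and coincides with the paper's proof.
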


Condition (4) in the Theorem (for some $\widehat{A}$)
may in fact follow from the other three conditions.
From the hypotheses of Theorem \ref{1.4},
the subsequential existence of a limiting metric space
(in the Gromov-Hausdorff topology) and a limiting scalar curvature measure
(in the weak-$\star$ topology) is automatic.  The content of the
theorem is that the metric space and the scalar curvature measure also
arise as a continuous limit, coming from a Ricci flow.

I thank Antoine Song for discussions and the referee for helpful comments.

\section{Proof of Theorem \ref{1.2}}

It would be unwieldy to write out equations for all the steps in the
proof of Theorem \ref{1.2}, so we give the main ingredients.
The proof is by induction on $n$, as in \cite[Section 4.9]{Gromov (2014)}.
\begin{lemma} \label{2.1}
  The theorem is true in dimension two.
\end{lemma}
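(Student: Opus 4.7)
The plan is to take $N$ and $M$ to be concentric geodesic disks, with $M$ a slightly shrunken copy of the disk defining $N$, and to let $f$ be the radial contraction in geodesic polar coordinates. In dimension two one has $R = 2 K_{\mathrm{Gauss}}$, so the hypothesis reads $K_Z(z) < K$; by continuity there exist $r_0 > 0$ and $\varepsilon > 0$ with $\overline{B(z, r_0)} \subset U$ and $K_Z(x) \le K - \varepsilon$ for all $x \in B(z, r_0)$.

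For parameters $0 < r < r_0$ and $\delta \in (0, 1)$ to be chosen below, I would take $N = B(z, r) \subset Z$ and $M \subset M_K$ to be a geodesic disk of radius $(1 - \delta) r$ in the model space. Write the metric on $N$ in geodesic polar coordinates as $d\rho^2 + G(\rho, \theta)^2 \, d\theta^2$ with the standard expansion $G(\rho, \theta) = \rho - K_Z(z) \rho^3 / 6 + O(\rho^4)$, and the model metric as $d\tilde\rho^2 + s_K(\tilde\rho)^2 \, d\theta^2$. Define $f(\rho, \theta) := ((1 - \delta)\rho, \theta)$. In orthonormal frames $df$ is diagonal with entries $1 - \delta$ and $s_K((1 - \delta)\rho)/G(\rho, \theta)$; Rauch comparison, together with the strict inequality $K_Z < K$ on $N$, yields $G(\rho, \theta) > s_K(\rho) \ge s_K((1-\delta)\rho)$ for $\rho > 0$, so $\|df\| \le 1 - \delta < 1$. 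The same estimate at $\rho = r$ shows that $\partial f$ is distance-decreasing on the boundary, and condition (4) holds tautologically because the inward normal geodesic $\rho = r - t$ is sent by $f$ to $\tilde\rho = (1 - \delta) r - (1 - \delta) t$, i.e. the inward normal geodesic from $f(n)$ parametrized at speed $1 - \delta$.

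The crux of the argument is the mean curvature inequality at the boundary. The geodesic curvatures are $G_\rho(r, \theta)/G(r, \theta)$ on $\partial N$ and $s_K'((1-\delta) r)/s_K((1-\delta) r)$ on $\partial M$, and a Taylor expansion in $r$ gives
\[ H_{\partial N} - H_{\partial M} = - \frac{\delta}{(1 - \delta) r} + \frac{r}{3}\bigl( (1 - \delta) K - K_Z(z) \bigr) + O(r^2). \]
The $-\delta/r$ term, produced by shrinking the target, has the wrong sign and is the main obstacle of the argument; it must be beaten by the positive comparison term $r(K - K_Z(z))/3 > 0$, which forces the scaling $\delta = O(r^2)$. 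Concretely I would set $\delta := r^2 (K - K_Z(z))/6$, so that the two leading terms combine to $r(K - K_Z(z))/6 > 0$, dominating the $O(r^2)$ error for all sufficiently small $r$. The whole construction works precisely because the two free parameters $r$ and $\delta$ can be chosen independently and $\delta$ can be tuned much smaller than $r^2$. Finally, for $K \le 0$, any geodesic disk in the Euclidean or hyperbolic plane has strictly positive geodesic curvature, so the constructed $M$ is automatically strictly convex.
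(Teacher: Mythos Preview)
Your argument is correct and essentially identical to the paper's: both take $N$ and $M$ to be small geodesic disks with $M$ radially shrunk by a factor (your $1-\delta$, the paper's $\alpha$) and $f$ the radial contraction in polar coordinates. The only cosmetic difference is that the paper first observes the mean-curvature inequality is already strict at $\delta=0$ and then invokes continuity to take $\delta$ slightly positive, whereas you quantify the same balance directly by choosing $\delta \sim r^2$.
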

\begin{proof}
If $n = 2$, choose normal coordinates
  $(r, \theta)$ around $z$ and $(r^\prime, \theta^\prime)$ around a point
  $m \in M$.  Given $\alpha \in (0,1]$, let $N$ be given by
  $r \le r_0$ and let $M$ be given by $r^\prime \le r_0^\prime = \alpha r_0$.
  In the normal coordinates, to leading order,
 \begin{align} \label{2.2}
  g_N \sim & \: dr^2 + r^2 (1 - \frac16 R_z r^2) d\theta^2, \\
  g_M \sim & \: (dr^\prime)^2 +
             (r^\prime)^2 (1 - \frac13 K (r^\prime)^2) (d\theta^\prime)^2, \notag \\
  H_{\partial N} \sim & \: \frac{1}{r_0} - \frac{1}{6} R_z r_0, \notag \\
    H_{\partial M} \sim & \: \frac{1}{r^\prime_0} - \frac{1}{3} K r_0^\prime. \notag 
    \end{align}
  Define a Lipschitz function
  $F$ by $F(r, \theta) = (\alpha r, \theta)$.
Then 
\begin{align} \label{2.3}
  F^* g_M \sim & \: \alpha^2 dr^2 + \alpha^2 r^2
                 (1 - \frac13 K \alpha^2 r^2) d\theta^2, \\
  (\partial F)^* H_{\partial M} \sim & \: \frac{1}{\alpha r_0} -
                                       \frac{1}{3} K \alpha r_0. \notag
\end{align}
For small $r_0$, if $\alpha = 1$ then $F$ is distance-nonincreasing,
$\partial F$ is distance-decreasing and
$H_{\partial N} > (\partial F)^*
  H_{\partial M}$. By continuity, if
  $\alpha$ is slightly less than one then
  $F$ and $\partial F$
  will be distance-decreasing and we will
  still have $H_{\partial N} > (\partial F)^*
  H_{\partial M}$. If $f$ is the result of slightly smoothing $F$ near $z$ then
  it will satisfy the conclusion of the
  theorem.
\end{proof}

Now suppose that $n > 2$ and the theorem is true in dimension
  $n-1$.

    \begin{lemma} \label{2.4}
    There is a
    (small) minimal
    hypersurface $V \subset U$ containing $z$ and a unit normal
    vector $v \in T_zZ$ so that
    \begin{itemize}
    \item The second fundamental form $A_z$ at $z$ has $|A_z| \ll 1$,
    \item $\Ric(v,v) < (n-1)K$, and
    \item The scalar curvature of $V$ at $z$ satisfies
      $R^\prime_z < (n-1)(n-2) K$.
     \end{itemize}
  \end{lemma}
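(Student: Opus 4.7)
The plan is to first select a unit vector $v \in T_zZ$ using a linear-algebraic argument driven by the ambient scalar curvature bound, then produce a minimal hypersurface through $z$ whose unit normal at $z$ is close to $v$ and whose second fundamental form at $z$ is arbitrarily small, and finally extract the three required inequalities from the Gauss equation.

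For the direction: the function $w \mapsto \Ric_z(w,w)$ on the unit sphere of $T_zZ$ has average value $R_z/n$, and the hypothesis $R_z < n(n-1)K$ yields both
\[
\frac{R_z}{n} < (n-1)K
\qquad\text{and}\qquad
R_z - 2\cdot \frac{R_z}{n} \;=\; \frac{n-2}{n} R_z \;<\; (n-1)(n-2)K.
\]
Since the unit sphere is connected (as $n \ge 3$), by the intermediate value theorem there is a unit vector $v \in T_zZ$ with $\Ric_z(v,v) = R_z/n$, and both strict inequalities $\Ric_z(v,v) < (n-1)K$ and $R_z - 2\Ric_z(v,v) < (n-1)(n-2)K$ then hold with a common margin $\delta > 0$.

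For the hypersurface: work in geodesic normal coordinates around $z$ and pass to scale $r > 0$. After rescaling $B(z,r)$ to unit size, the metric converges in $C^\infty$ to the Euclidean one, in which the affine hyperplane $P_v$ through the origin orthogonal to $v$ is a minimal hypersurface with vanishing second fundamental form. Writing nearby hypersurfaces as graphs $x^n = u(x^1,\ldots,x^{n-1})$ over $P_v$, the minimal surface equation is quasilinear elliptic with Euclidean-Laplacian linearization at $u=0$. By the implicit function theorem on $C^{2,\alpha}$, for all sufficiently small $r$ one obtains a one-parameter family of minimal graphs in the unit ball, parametrized by a small shift $c$ of the Dirichlet boundary height; since the graph value at the origin depends continuously on $c$ and sweeps through zero, some choice of $c$ yields a minimal graph passing through $z$. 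Unscaling, this produces a minimal hypersurface $V \subset U$ containing $z$ whose unit normal $\nu$ at $z$ is $C^0$-close to $v$ and whose second fundamental form satisfies $|A_z| = O(r)$.

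For the conclusion: at $z$ the Gauss equation gives $R^V_z = R_z - 2\Ric_z(\nu,\nu) + (\tr A_z)^2 - |A_z|^2$, and minimality reduces this to $R^V_z = R_z - 2\Ric_z(\nu,\nu) - |A_z|^2$. Taking $r \to 0$ we have $\nu \to v$ and $|A_z| \to 0$, so $\Ric_z(\nu,\nu) \to \Ric_z(v,v) < (n-1)K$ and $R^V_z \to R_z - 2\Ric_z(v,v) < (n-1)(n-2)K$; for $r$ sufficiently small all three conditions $|A_z| \ll 1$, $\Ric_z(\nu,\nu) < (n-1)K$, and $R^V_z < (n-1)(n-2)K$ therefore hold simultaneously, with this $\nu$ serving as the required unit normal in the statement. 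The main obstacle is producing a minimal hypersurface that actually contains $z$ with a prescribed normal direction, since a direct Plateau-type minimization has no control on which point the surface contains; the scaling reduction handles this cleanly by turning the problem into a small perturbation of a flat hyperplane, where the implicit function theorem together with the one-parameter shift of the Dirichlet boundary height supplies precisely the extra degree of freedom needed to catch the point $z$.
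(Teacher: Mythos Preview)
Your proof is correct and close in spirit to the paper's, but differs in two places worth noting.

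For the direction $v$, the paper does a case analysis on the eigenvalues of $\widehat{\Ric}=\Ric_z-(n-1)Kg$: depending on the sign of the top eigenvalue, it chooses $v$ as a top eigenvector, a perturbation out of the kernel, or a null-like vector for the indefinite form. Your averaging argument---pick $v$ with $\Ric_z(v,v)=R_z/n$ via the intermediate value theorem, so that both $\Ric_z(v,v)<(n-1)K$ and $R_z-2\Ric_z(v,v)=\tfrac{n-2}{n}R_z<(n-1)(n-2)K$ hold at once---is shorter and avoids the case split entirely.

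For the hypersurface, the paper rescales and invokes White's stability results to perturb a Euclidean foliation by parallel hyperplanes into a genuine minimal foliation, then argues that the map $w\mapsto\alpha(w)$ from the intended normal to the actual normal at $z$ is a local diffeomorphism of $S^{n-1}$, hence surjective, so one may arrange the normal at $z$ to be \emph{exactly} $v$. Your version stays with a single graph and uses the implicit function theorem on the Dirichlet problem, with the boundary height $c$ as the extra parameter needed to force the graph through $z$; the normal you obtain is only close to $v$. Both work because the inequalities are strict and stable under $C^1$-perturbation; your route is more self-contained (no appeal to \cite{White (1987),White (1991)}), while the paper's buys the exact normal, which is not actually needed here.
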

  \begin{proof}
    Multiplying $g_Z$ by a large constant $\lambda$,
    the geometry of a unit ball around
    $z$ becomes closer and closer to Euclidean.
    Given a unit vector $w \in T_zZ$, consider the foliation of the
    rescaled ball of radius two given by hyperplanes perpendicular to
    $w$ with respect to the Euclidean metric in the normal coordinates.
    The leaves are minimal with respect to the Euclidean metric.
    Consider the part of the foliation whose height varies between
    $- 1.5$ and $1.5$.   
    Using stability results as in
    \cite{White (1987),White (1991)},
    if $\lambda$ is large enough then there is a small
    $C^k$-perturbation
    of the foliation by minimal hypersurfaces that preserves the
    intersections of the leaves with the sphere of radius two.
    This restricts to a minimal foliation of the unit ball with
    arbitrarily small
    second fundamental form, if $\lambda$ is large enough. Let
    $\alpha(w)$ be the choice of unit normal, to the leaf at $z$, that is
 close to $w$.
    For sufficiently large $\lambda$,
    the map $\alpha : S^{n-1} \rightarrow S^{n-1}$ is a local diffeomorphism,
    hence is surjective.  We will take $w = \alpha^{-1}(v)$ for an
      appropriately chosen $v$ that is specified below and let $V$ be the
      corresponding minimal leaf through $z$.

From the Gauss-Codazzi equation, the scalar curvature
$R^\prime_{{z}}$ of ${z}$ in $V$ is given by
\begin{equation} \label{2.5}
  R^\prime_{{z}} = R_{{z}} - 2 \Ric(v,v) + (\Tr(A))^2 -
  \Tr(A^2).
\end{equation}      
Put
\begin{equation} \label{2.6}
  \widehat{\Ric} = \Ric - (n-1) K g,
\end{equation}
with trace $\widehat{R} = R - n(n-1)K$.
Let $\widehat{R}_{11} \le \widehat{R}_{22} \le \ldots \le \widehat{R}_{nn}$
be the eigenvalues of $\widehat{\Ric}_z$.

If $\widehat{R}_{nn} < 0$, let
$v$ be a corresponding unit eigenvector. Then $\Ric(v,v) < (n-1)K$ and 
\begin{align} \label{2.7}
R_{{z}} - 2 \Ric(v,v) & = R_{11} + \ldots + R_{n-2,n-2} + 
                        (R_{n-1,n-1} - R_{nn}) \\
  & \le R_{11} + \ldots + R_{n-2,n-2} <
  (n-1)(n-2)K. \notag
\end{align}

  If $\widehat{R}_{nn} = 0$, let $v$ be a unit vector that is a slight
  perturbation from $\Ker(\widehat{\Ric})$.
  Then $\Ric(v,v) < (n-1)K$ and we still have
  $ R_{{z}} - 2 \Ric(v,v) < (n-1)(n-2)K$.

  If 
  $\widehat{R}_{nn} > 0$ then the quadratic form $\widehat{\Ric}$ is
  indefinite and
  for any $\delta > 0$, we can find a unit vector $v$ so that
  $- \delta < \widehat{\Ric}(v,v) < 0$.  Then
  \begin{equation} \label{2.8}
    (n-1)K - \delta < \Ric(v,v) < (n-1)K,
  \end{equation}
  so
  $R_{{z}} - 2 \Ric(v,v) < R_z - 2(n-1)K + 2 \delta$. Taking
    $\delta$ small enough, we can ensure that 
    $R_{{z}} - 2 \Ric(v,v) < (n-1)(n-2)K$.
    
    In any case, we can achieve a negative upper bound on
    $R_{{z}} - 2 \Ric(v,v) - (n-1)(n-2)K$ that is independent of
    $\lambda$.
 Finally, taking $\lambda$
 large enough to ensure that $|(\Tr(A))^2 - \Tr(A^2)|$ is small, we obtain
 from (\ref{2.5}) that $R^\prime_{{z}} < (n-1)(n-2)K$.
  \end{proof}

With reference to the $V$ of Lemma \ref{2.4},
  let $N^\prime$ be an $(n-1)$-dimensional compact
  submanifold-with-boundary of $V$ containing $z$
  obtained by applying the induction
  hypothesis, with corresponding submanifold $M^\prime$ of the
  $(n-1)$-dimensional model space
  of constant curvature $K$, and with map
  $f^\prime : N^\prime \rightarrow M^\prime$ of nonzero degree so that
  $f^\prime$ and $\partial f^\prime$ are distance-decreasing, and
  $H_{\partial N^\prime} > (\partial f^\prime)^\star H_{\partial M^\prime}$. 
  Taking $N^\prime$ small enough, we can assume that
  the unit normal vector $\nu_{N^\prime}$ satisfies $\Ric(\nu_{N^\prime},
  \nu_{N^\prime}) < (n-1)K$ on
  $N^\prime$.

  For small $\epsilon > 0$, let $N^{(2)}$ be the cylindrical
  region $\{\exp(u \nu_{N^\prime}) : |u| \le \epsilon\}$ in $Z$. Similarly, put
  $M^{(2)} = \{\exp(u \nu_{M^\prime}) : |u| \le (1-\delta_{N^\prime})
  \epsilon\}$ in the
  $n$-dimensional model space of constant curvature $K$, where
  $\nu_{M^\prime}$ is a unit normal field to $M^\prime$ and
  $\delta_{N^\prime}$ is
  the parameter appearing in the induction hypothesis.
  See Figure \ref{figure1}, which illustrates the case $K=0$.
\begin{figure}[h!]
  \includegraphics[width=3in]{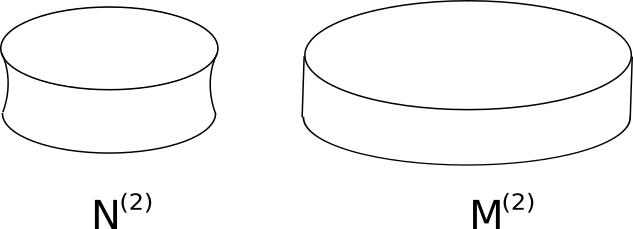}
  \caption{}
  \label{figure1}
\end{figure}
In what follows we can always reduce $l_{N^\prime}$ and $\delta_{N^\prime}$.
  Define $f^{(2)} : N^{(2)} \rightarrow M^{(2)}$ by
  $f^{(2)}(\exp_{n^\prime}(u \nu_{N^\prime})) =
  \exp_{f^{\prime}(n^\prime)}((1-\delta_{N^\prime}) u \nu_{M^\prime})$.
  Note that $\partial N^{(2)}$ has a top face and a bottom face,
both diffeomorphic to $N^\prime$, 
and an annular region diffeomorphic to $[- \epsilon, \epsilon] \times
\partial N^\prime$.  The annular region meets the top face orthogonally in a
codimension-two stratum diffeomorphic to $\partial N^\prime$, with a
similar statement for the bottom face. The maps $f^{(2)}$ and
$\partial f^{(2)}$ are distance-decreasing.

  Along the geodesics in $N^{(2)}$ normal to $N^\prime$, we have
  \begin{equation} \label{2.9}
\frac{dH}{dt} + \Tr(A^2) = - \Ric(\gamma^\prime, \gamma^\prime).        
  \end{equation}
  For small $\epsilon$, 
  if $N^\prime$ is taken small enough then
$|A_z| \ll 1$ and
on the top and bottom faces of $N^{(2)}$, we have
  $H \sim - \epsilon \Ric(\nu_{N^\prime},\nu_{N^\prime})$.
  Similarly,
  on the top and bottom faces of $M^{(2)}$, we have
    $H \sim - (n-1) \epsilon \left( 1-\delta_{N^\prime} \right)K$.
  Including the annular region over $\partial N^\prime$, if $\epsilon$
  and $\delta_{N^\prime}$ are
    small then
    $H_{\partial N^{(2)}} >
    (\partial f^{(2)})^\star H_{\partial M^{(2)}}$.

    We can assume that the parameter $l_{N^\prime}$ in the induction
    assumption is less than
  the focal radius of $\partial N^\prime \subset N^\prime$, and $l_{N^\prime} \ll \epsilon$.
Given $\epsilon^{\prime} \ll l_{N^\prime}$, let
    $N^{(3)}$ be the points inside of
    $N^{(2)}$ that have distance at least $\epsilon^{\prime}$
    from $\partial N^{(2)}$.  Let
    $N^{(4)}$ be the
    $\epsilon^{\prime}$-neighborhood of
    $N^{(3)}$ in $Z$.
    Do a similar construction for $M$, to obtain 
    $M^{(4)}$.

    The boundary $\partial N^{(4)}$ is $C^{1,1}$-regular and
    has a decomposition into a top face $F_+^{(4)}$, a bottom face
    $F_-^{(4)}$, an
    annular belt $A^{(4)}$, an upper tube $T_+^{(4)}$ and a lower tube
    $T_-^{(4)}$. See Figure \ref{figure2}.
    There is a similar decomposition of $\partial M^{(4)}$.
\begin{figure}[h!]
  \includegraphics[width=3in]{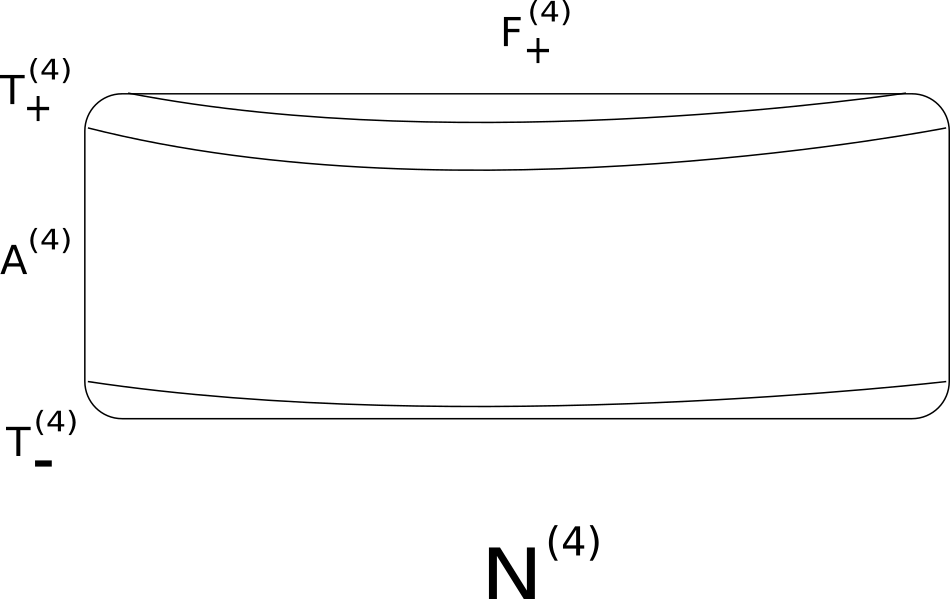}
  \caption{}
  \label{figure2}
  \end{figure}

  Consider the top tubular region $T_+^{(4)}$. It has two
  boundary components
  $\partial T_{+,1}^{(4)}$ and $\partial T_{+,2}^{(4)}$, with
  $\partial T_{+,1}^{(4)}$ also being a boundary
  component of $F_+^{(4)}$ and $\partial T_{+,2}^{(4)}$ also being a boundary
  component of $A^{(4)}$.

Let $p : N^{(2)} \rightarrow N^\prime$ be projection onto the second factor in
the diffeomorphism $N^{(2)} \cong [ - \epsilon, \epsilon] \times
N^\prime$.
Let $\widehat{p} : M^{(2)} \rightarrow M^\prime$ be the analogous map
on $M^{(2)}$.
Given $n^\prime \in \partial N^\prime$,
put $L_{n^\prime} = \{\exp_{n^\prime} (t \nu_{\partial N^\prime}) :
0 \le t \le l_{N^\prime} \} \subset N^\prime$ and put
$G_{n^\prime} = p^{-1}(L_{n^\prime})$.
  Put  $F_{\pm,n^\prime}^{(4)} = F_{\pm}^{(4)} \cap G_{n^\prime}$,
  $T_{\pm,n^\prime}^{(4)} = T_{\pm}^{(4)} \cap G_{n^\prime}$ and
  $A_{n^\prime}^{(4)} = A^{(4)} \cap G_{n^\prime}$.
  See the left-hand picture in Figure \ref{figure3}, which
  illustrates $G_{n^\prime} \cap N^{(4)}$.
\begin{figure}[h!]
  \includegraphics[width=2in]{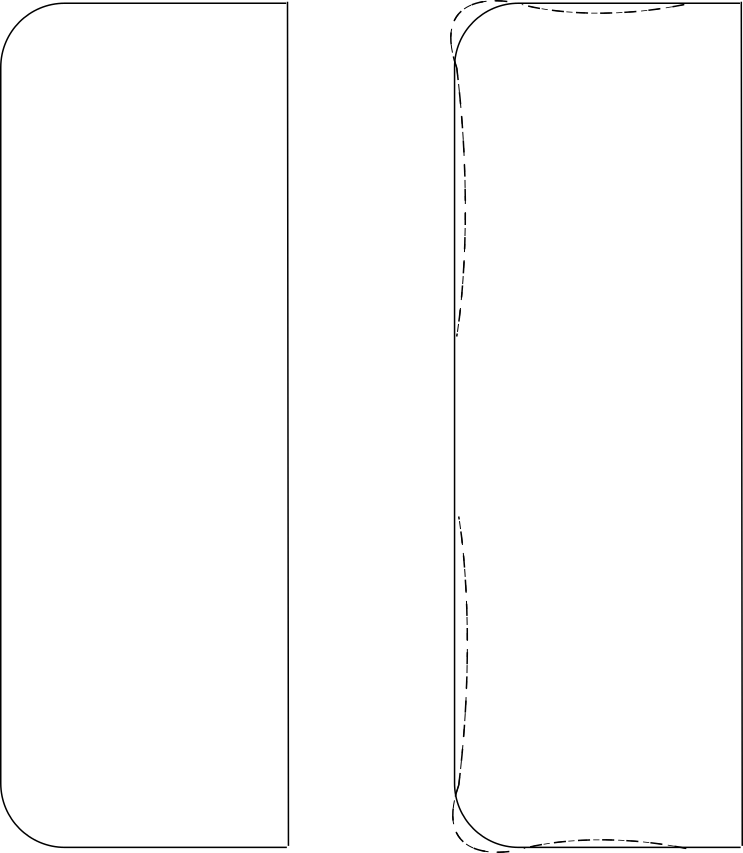}
  \caption{}
  \label{figure3}
  \end{figure}  Define
$\widehat{G}_{m^\prime}$,
  $\widehat{F}_{\pm,m^\prime}^{(4)}, \widehat{T}_{\pm,m^\prime}^{(4)}$ and
  $\widehat{A}_{m^\prime}^{(4)}$ similarly for $M^{(4)}$.

We define a map $\partial f^{(4)} : \partial N^{(4)} \rightarrow
\partial M^{(4)}$ as follows. Given $n^\prime \in \partial N^\prime$, we first
send the curve $T_{+,n^\prime}^{(4)} \cup A_{n^\prime}^{(4)} \cup
T_{-,n^\prime}^{(4)}$ to
$\widehat{T}_{+,(\partial f^\prime)(n^\prime)}^{(4)} \cup
\widehat{A}_{(\partial f^\prime)(n^\prime)}^{(4)} \cup
\widehat{T}_{-,(\partial f^\prime)(n^\prime)}^{(4)}$
piecewise linearly with respect to arc length.
Next, given the point $x = \partial T_{+,1}^{(4)} \cap G_{n^\prime}$,
write $p(x)$ as $\exp_{n^\prime}(\tau_{n^\prime} \nu_{\partial N^\prime})$ for some
$\tau_{n^\prime} \in (0, l_{N^\prime})$. The parameter $\tau_{n^\prime}$ is
comparable to $\epsilon^\prime$.
Define
$\widehat{\tau}_{(\partial f^\prime)(n^\prime)}$ similarly for $M^\prime$.
Let $\lambda_{n^\prime}$ be the increasing linear bijection from
$[\tau_{n^\prime}, l_{N^\prime}]$ to
$[\widehat{\tau}_{(\partial f^\prime)(n^\prime)}, 
(1 - \delta_{N^\prime}) l_{N^\prime}]$.
Given $t \in [\tau_{n^\prime}, l_{N^\prime}]$, let
$y_t$ be the point in $F_{+,n^\prime}^{(4)}$ with $p(y_t) =
\exp_{n^\prime}(t \nu_{\partial N^\prime})$ and let
$(\partial f^{(4)})(y_t)$ be the point in
$\widehat{F}_{+,(\partial f^\prime)(n^\prime)}^{(4)}$ whose image under
$\widehat{p}$ is
$\exp_{(\partial f^\prime)(n^\prime)}(\lambda_{n^\prime}(t) \nu_{\partial M^\prime})$.
Define $\partial f^{(4)}$ on the remaining points of
$F_{+,n^\prime}^{(4)}$ to be the same as 
$\partial f^{(2)}$. Finally, define $\partial f^{(4)}$ on
$F_{-,n^\prime}^{(4)}$ by a similar construction.

The lengths of $T_{\pm,n^\prime}^{(4)}$ and  
$\widehat{T}_{\pm,m^\prime}^{(4)}$ are $\frac{\pi}{2\epsilon^\prime} +
O((\epsilon^\prime)^0)$
as $\epsilon^\prime \rightarrow \infty$.  The Lipschitz constant of
$\partial f^{(4)}$ is $1 + O(\epsilon^\prime)$. We can extend 
$\partial f^{(4)}$ to a map
$f^{(4)} : N^{(4)} \rightarrow M^{(4)}$, sending
$G_{n^\prime} \cap N^{(4)}$ to $\widehat{G}_{(\partial f^\prime)(n^\prime)}
\cap M^{(4)}$, whose Lipschitz constant is $1 + O(\epsilon^\prime)$.

By tube formulas, the mean curvature on $T_\pm^{(4)}$
is $\frac{1}{\epsilon^\prime} + O(\epsilon^\prime)$, and
similarly for the tube regions of $\partial M^{(4)}$
\cite[Theorem 9.23]{Gray (2004)}.
We now perturb
$N^{(4)}$ to increase the mean curvature on $T_\pm^{(4)}$.
To do so, we effectively borrow some of the mean curvature from
$F_\pm^{(4)}$ and $A^{(4)}$

We do some preliminary calculations.
Let $\phi : [0, \infty) \rightarrow \R$ be a smooth nonnegative function
such that $\phi(0) = 0$, $\phi^\prime(0) = 1$ and $\phi(x)$ vanishes when
$x \ge 2$. 
Given constants $c, c^\prime > 0$ so that $c l_{N^\prime} \gg 1$ and
$c \epsilon^\prime \ll 1$, and 
$L < \infty$, 
define $u_L \in C^{1,1}(\R)$ by
\begin{equation} \label{2.10}
  u_L =
  \begin{cases}
\frac{c^\prime L}{2c} \phi(-cx) & x < 0 \\
    \frac{c^\prime}{2} x \left( x - L \right) & 0 \le x \le
    L \\
    \frac{c^\prime L}{2c} 
    \phi(c(x-L)) & x > L.
  \end{cases}
  \end{equation}
  Then
  \begin{equation} \label{2.11}
  u_L^{\prime \prime} =
  \begin{cases}
\frac{c^\prime c L}{2} \phi^{\prime \prime}(-cx) & x < 0 \\
    c^\prime & 0 < x <
    L \\
    \frac{c^\prime c L}{2}
    \phi^{\prime \prime}(c(x - L)) 
& x > L.
  \end{cases}
  \end{equation}

  Let $d_1$ be the intrinsic distance function
  on $\partial N^{(4)}$ from $\partial T_{+,1}^{(4)}$ and let 
  $d_2$ be the intrinsic distance function
  on $\partial N^{(4)}$ from $\partial T_{+,2}^{(4)}$. Let
  $\pi_1 : \partial N^{(4)} \rightarrow \partial T_{+,1}^{(4)}$ be
  nearest point projection with respect to the intrinsic distance on
  $\partial N^{(4)}$, and similarly for $\pi_2$.  (In the application, the
  nearest point will be unique.)
  Define a function
  $V_+ \in C^{1,1}(\partial N^{(4)})$ by
  \begin{equation} \label{2.12}
 V_+  =
  \begin{cases}
- \: \frac{c^\prime }{2} d_1 d_2 & \text{on } T_+^{(4)}  \\
    \frac{c^\prime}{2c} (d_2 \circ \pi_1) \: \phi(c d_1) & \text{on } F_+^{(4)}
    \\
    \frac{c^\prime}{2c} (d_1 \circ \pi_2) \: \phi(c d_2) & \text{on } A_+^{(4)} \\
    0 & \text{on } T_-^{(4)} \cup F_-^{(4)}.
  \end{cases}
  \end{equation}
  Define  $V_- \in C^{1,1}(\partial N^{(4)})$ similarly, replacing
  $T_+^{(4)}$ by $T_-^{(4)}$. Put $V = V_+ + V_-$.
Deform $\partial N^{(4)}$ by distance $V$ in the
inward normal direction. See the right-hand picture in Figure \ref{figure3},
where
the deformation is indicated by dashed lines.
Let $N^{(5)}$ be the region bounded by the
ensuing hypersurface, i.e. $\partial N^{(5)}$ is the image of the
$C^{1,1}$-diffeomorphism $D : \partial N^{(4)} \rightarrow \partial N^{(5)}$
given by
$D(x) = \exp_x(V(x) \nu_{\partial N^{(4)}})$.
Note that the deformation is outward on $T^{(4)}_\pm$, of magnitude
comparable to $c^\prime (\epsilon^\prime)^2$, and inward on the rest of
$\partial N^{(4)}$, of magnitude comparable to 
$c^\prime c^{-1} \epsilon^{\prime}$.
 
The first variation formula for mean curvature is
\begin{equation} \label{2.13}
H^\prime = \triangle V + (|A|^2 + \Ric(\mu, \mu))) V.
\end{equation}
If $x$ denotes the length variable on a minimal arc in $T_+^{(4)}$
between $q \in \partial T_{+,2}^{(4)}$ and $\pi_1(q) \in \partial T_{+,1}^{(4)}$
then on $T_+^{(4)}$, to leading order $\triangle V \sim \frac{d^2}{dx^2} V$ and
$|A|^2 \sim (\epsilon^{\prime})^{-2}$. From (\ref{2.11}), we deduce that
on $T_\pm^{(4)}$
the change in $H$ roughly
ranges between $c^\prime \left( 1 - \frac{\pi^2}{32} \right)$ and
$c^\prime$. On the rest of $\partial N^{(4)}$, the change in $H$ is
bounded in magnitude by $\const c^\prime \epsilon^{\prime} (c+c^{-1})$.
Put $M^{(5)} = M^{(4)}$.

Define subsets of $\partial N^{(5)}$ by
$\widetilde{F}_{\pm}^{(5)} =
D({F}_{\pm}^{(4)})$,
$\widetilde{T}_{\pm}^{(5)} = D({T}_{\pm}^{(4)})$ and
$\widetilde{A}^{(5)} = D(A^{(4)})$.
Define $\partial f^{(5)} : \partial N^{(5)} \rightarrow \partial M^{(5)}$
by $\partial f^{(5)} = (\partial f^{(4)}) \circ D^{-1}$.
On $\widetilde{T}_{\pm}^{(5)}$,
the map $\partial f^{(5)}$ has a Lipschitz bound comparable to that of
$\partial f^{(4)}$, namely $1 + O( \epsilon^{\prime})$, using the fact that
the perturbation on ${T}_{\pm}^{(4)}$ is outward. (It may seem paradoxical
that $\widetilde{T}_{\pm}^{(5)}$ lies outside of
${T}_{\pm}^{(4)}$ but has a higher mean curvature.  One way to
understand this is by looking at Figure
\ref{figure3} and comparing the total turning angle of
${T}_{+,n^\prime}^{(4)}$ with the total turning angle of the
corresponding dotted segment on the right.)
If the Lipschitz bound of $\partial f^{(2)}$ is
$1 - \sigma$, where $\sigma > 0$, then the Lipschitz bound
of $\partial f^{(5)}$
on the rest of
$\partial N^{(5)}$ is
$1 - \sigma + \const c^\prime \epsilon^\prime$. In sum,
$\partial f^{(5)}$ has a Lipschitz bound that is
$1+ O(\epsilon^{\prime})$.
We can extend $\partial f^{(5)}$ to a map
$f^{(5)} : N^{(5)} \rightarrow M^{(5)}$ which also has a Lipschitz bound
that is
$1+ O(\epsilon^{\prime})$. We can assume that
$f^{(5)}$ maps normal geodesics to normal geodesics,
in a small neighborhood of $\partial N^{(5)}$.

On $\widetilde{T}_{\pm,n^\prime}^{(5)}$,
the ratio
$\frac{H_{\partial N^{(5)}}}{(\partial f^{(5)})^\star H_{\partial M^{(5)}}}$
is bounded below by $(1 + \const \epsilon^{\prime} c^\prime)$.
If $c^\prime \epsilon^{\prime} (c+c^{-1}) \ll 1$ then
on the rest of $\partial N^{(6)}$, there is a uniform lower bound
for the ratio that
is greater than one, coming from $\partial N^{(2)}$.
Hence by taking $c^\prime$ sufficiently large and then
$\epsilon^{\prime}$ sufficiently small, we
can ensure that
the Lipschitz constant of $f^{(5)}$ is strictly less than the minimum of 
$\frac{H_{\partial N^{(5)}}}{(\partial f^{(5)})^\star H_{\partial M^{(5)}}}$.

Put $N^{(6)} = N^{(5)}$.  
Taking normal coordinates around $f^{(5)}(z) \in M^{(5)}$, let
$M^{(6)}$ be the result of a slight radial shrinking of
$M^{(5)}$. If $f^{(6)} : N^{(6)} \rightarrow M^{(6)}$ is the composite map then
we can ensure that $f^{(6)}$ and $\partial f^{(6)}$ are distance-decreasing,
while $H_{\partial N^{(6)}} >
(\partial f^{(6)})^\star H_{\partial M^{(6)}}$.

We run the mean curvature flow on 
$\partial N^{(6)}$ and $\partial M^{(6)}$ for a time $\tau \ll (\epsilon^{\prime})^2$ (c.f. \cite{Ecker-Huisken (1991)})
to obtain smooth hypersurfaces
$\partial N$ and $\partial M$, and hence $N$ and $M$.
As the mean curvature obeys a diffusion-type equation under mean curvature flow,
the main effect on the mean curvature at a point will be to average the
mean curvature with respect to 
a Gaussian centered at the point with scale on the order of
$\sqrt{\tau}$. In particular, if $\tau$ is small enough then
for the map $\partial f : \partial N \rightarrow
\partial M$, obtained from $f^{(6)}$ by 
following the flows,
the inequalities will be preserved. We can then 
extend it to a smooth distance-decreasing map
$f : N \rightarrow M$. By construction, after choosing orientations on
$N$ and $M$ (which are diffeomorphic to balls), the degree is nonzero.
Finally, after a small perturbation of $f$, we can assume that
there are numbers $\delta,l > 0$ so that for all $n \in \partial N$ and
  $t \in [0, l)$, one has
  $f(\exp_n t \nu_{\partial N}) = \exp_{f(n)} ((1-\delta) t
  \nu_{\partial M})$.

  If $K < 0$ then for sufficiently small $\tau$,
  the preceding steps preserve the
  strict convexity of $M$.
  If $K = 0$ then they preserve the convexity of $M$ and we can
  slightly perturb $M$ at the end, for example by the mean curvature flow,
  to make it strictly convex.

  \section{Proof of Theorem \ref{1.3}}

\begin{lemma} \label{3.1} There is a constant $C^\prime = C^\prime(n,A) < \infty$ so that
if $0 < s \le t$ then
  \begin{equation} \label{3.2}
    d_s - C^\prime \left( \sqrt{t} - \sqrt{s} \right) \le d_t \le
    e^{E(s-t)} d_s.
    \end{equation}  
\end{lemma}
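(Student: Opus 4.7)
The plan is to derive the two inequalities by completely different methods: the upper bound follows from an infinitesimal metric comparison using the Ricci lower bound, and the lower bound follows from the Hamilton-Perelman distance distortion estimate applied with the sharp curvature bound that blows up like $1/t$.

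For the upper bound, I would start from the Ricci flow equation $\partial_t g = -2\Ric$ and the hypothesis $\Ric \ge Eg$, which gives $\partial_t g \le -2Eg$ as symmetric $2$-tensors. Integrating this inequality pointwise along the flow yields $g(t) \le e^{-2E(t-s)} g(s)$, and taking square roots in the length functional gives $L_t(\gamma) \le e^{E(s-t)} L_s(\gamma)$ for any fixed smooth curve $\gamma$. Taking the infimum over curves joining two points produces $d_t \le e^{E(s-t)} d_s$.

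For the lower bound, I would apply the standard Hamilton-Perelman lemma, which says: if $|\Ric| \le (n-1)K$ on $g(\tau)$-balls of radius $r_0$ around the endpoints $x,y$, then along the flow
\begin{equation*}
\frac{d}{d\tau} d_\tau(x,y) \ge -\,\const(n)\Bigl(Kr_0 + r_0^{-1}\Bigr).
\end{equation*}
Here $|\Rm_{g(\tau)}| < A/\tau$ gives $|\Ric_{g(\tau)}| \le (n-1)A/\tau$, so I would set $K = A/\tau$ and optimize by taking $r_0 = \sqrt{\tau/A}$, which balances the two terms and produces the time-derivative bound
\begin{equation*}
\frac{d}{d\tau} d_\tau(x,y) \ge -\,\frac{C^\prime(n,A)}{\sqrt{\tau}}.
\end{equation*}
Integrating this from $s$ to $t$ gives $d_t - d_s \ge -2C^\prime(\sqrt{t}-\sqrt{s})$, absorbing the factor of $2$ into the constant $C^\prime$.

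The main obstacle is the technical application of the Hamilton-Perelman lemma near $\tau = 0$, where one needs to ensure the $g(\tau)$-balls of radius $r_0 = \sqrt{\tau/A}$ around the endpoints lie inside a region with the claimed curvature bound; this is automatic on compact $M$ once $\tau > 0$, since the hypothesis $|\Rm_{g(\tau)}| < A/\tau$ holds globally. A minor bookkeeping point is that the distance-distortion estimate applies when $d_\tau(x,y) > 2r_0$, so one handles the (easy) regime where the points are already closer than $2\sqrt{\tau/A}$ separately; in that regime the desired inequality is trivially satisfied by enlarging $C^\prime$, since $d_s$ is then also controlled by $\sqrt{\tau}$-scale quantities through the upper bound already proved.
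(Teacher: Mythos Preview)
Your proposal is correct and follows exactly the approach the paper invokes: the paper's proof simply cites the distance distortion estimates from \cite[Remark 27.5 and Corollary 27.16]{Kleiner-Lott (2008)}, and what you have written is precisely the content of those estimates (the Ricci lower bound integrated pointwise for the upper inequality, and the Hamilton--Perelman lemma with $r_0 \sim \sqrt{\tau}$ for the lower inequality). There is nothing to add.
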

\begin{proof}
  This follows from distance distortion estimates for Ricci flow, as in
  \cite[Remark
27.5 and Corollary 27.16]{Kleiner-Lott (2008)}.
  \end{proof}

\begin{corollary} \label{3.3}
The diameter of $(M, g(t))$ is uniformly bounded above in $t$.
  \end{corollary}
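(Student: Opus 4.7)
The plan is to derive the corollary as an essentially immediate consequence of the left-hand inequality in Lemma \ref{3.1}, applied with the upper time fixed at $t = T$. The role of the other hypotheses of Theorem \ref{1.3} is implicit in Lemma \ref{3.1}, so once that lemma is in hand, the diameter bound requires no further input beyond the finiteness of $\diam(M, g(T))$.

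More precisely, I would first observe that the inequality $d_s - C^\prime(\sqrt{t} - \sqrt{s}) \le d_t$ of Lemma \ref{3.1} is a pointwise statement about the two distance functions: for any $x, y \in M$ and any $0 < s \le t \le T$,
\begin{equation*}
d_{g(s)}(x,y) \le d_{g(t)}(x,y) + C^\prime \bigl( \sqrt{t} - \sqrt{s} \bigr).
\end{equation*}
Specializing to $t = T$, the right-hand side is bounded by $\diam(M, g(T)) + C^\prime \sqrt{T}$, independently of the chosen points $x, y$ and of $s \in (0, T]$.

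Taking the supremum over $x, y \in M$ then yields
\begin{equation*}
\diam(M, g(s)) \le \diam(M, g(T)) + C^\prime \sqrt{T}
\end{equation*}
for every $s \in (0, T]$. Since $M$ is compact and $g(T)$ is a smooth Riemannian metric on $M$, the quantity $\diam(M, g(T))$ is finite, so the right-hand side is a finite constant independent of $s$. There is no real obstacle here; the only point to verify is that Lemma \ref{3.1} indeed gives a pointwise distance comparison rather than just a comparison of diameters, which is standard for the Ricci flow distance distortion estimates cited from \cite{Kleiner-Lott (2008)}.
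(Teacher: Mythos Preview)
Your proof is correct and is exactly the argument the paper has in mind: the corollary is stated without proof immediately after Lemma \ref{3.1}, and the intended derivation is precisely the one you give, namely specializing the left-hand inequality $d_s \le d_t + C^\prime(\sqrt{t}-\sqrt{s})$ to $t=T$ and taking the supremum over pairs of points.
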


\begin{lemma} \label{3.4}
There are some $v_0, A^\prime > 0$ so that
for all $(x,t) \in M \times (0, T]$,
  \begin{enumerate}
\item   $\vol_{g(t)} \left( B_{g(t)}(x, 1) \right) \ge v_0$, and
\item    $\vol_{g(t)} \left( B_{g(t)}(x, \sqrt{t}) \right) > A^\prime
  t^{n/2}$
  \end{enumerate}
      \end{lemma}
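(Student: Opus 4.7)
The plan is to deduce both parts of the lemma from Bishop-Gromov volume comparison, using the uniform Ricci lower bound $\Ric_{g(t)} \ge E g(t)$ and the diameter bound of Corollary \ref{3.3}. The only preliminary step needed is a uniform positive lower bound on the total volume $\vol(M, g(t))$; notably the curvature upper bound $|\Rm|<A/t$ enters only indirectly, through Corollary \ref{3.3}.

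For the total volume, the Ricci flow evolution formula gives $\frac{d}{dt}\vol(M, g(t)) = -\int_M R_{g(t)}\dvol_{g(t)}$, and tracing $\Ric \ge Eg$ yields $R \ge nE$, hence $\frac{d}{dt}\vol(M, g(t)) \le -nE\vol(M, g(t))$. Integrating backward from $T$ gives
\begin{equation*}
\vol(M, g(t)) \;\ge\; e^{nE(T-t)}\vol(M, g(T)) \;\ge\; V_0 \;>\; 0,
\end{equation*}
where $V_0 = \min(1, e^{nET})\vol(M, g(T))$ depends only on $n$, $E$, $T$ and the fixed metric $g(T)$.

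Now set $K = E/(n-1)$ and let $V_K(r)$ denote the volume of a radius-$r$ ball in the $n$-dimensional simply connected model space of constant curvature $K$. Bishop-Gromov under $\Ric \ge (n-1)Kg$ asserts that $r \mapsto \vol_{g(t)}(B_{g(t)}(x, r))/V_K(r)$ is nonincreasing. Since Corollary \ref{3.3} gives $B_{g(t)}(x, D) = M$, for every $r \in (0, D]$ we have
\begin{equation*}
\vol_{g(t)}(B_{g(t)}(x, r)) \;\ge\; \frac{V_K(r)}{V_K(D)}\vol(M, g(t)) \;\ge\; \frac{V_K(r)}{V_K(D)} V_0.
\end{equation*}
Setting $r = 1$ gives part (1). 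For part (2), $r = \sqrt{t}$ lies in $(0, \sqrt{T}\,]$, which by Bonnet-Myers (when $K > 0$) lies strictly below the model-space diameter; on this bounded range $V_K(r) \ge c(K, T) r^n$ with $c > 0$, yielding $\vol_{g(t)}(B_{g(t)}(x, \sqrt{t})) \ge A' t^{n/2}$. I do not anticipate any serious obstacle; once the total volume is controlled, both estimates reduce to standard volume comparison at two chosen scales.
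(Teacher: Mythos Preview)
Your argument is correct and is essentially the paper's own proof, just with the Bishop--Gromov step spelled out in more detail: the paper likewise bounds the total volume via $\frac{d}{dt}\Vol(M,g(t))\le -nE\Vol(M,g(t))$ and then invokes the Ricci lower bound, the diameter bound of Corollary~\ref{3.3}, and Bishop--Gromov comparison. (A trivial side remark: when $1>D$ or $\sqrt{t}>D$ the ball already equals $M$, so those cases fall out immediately from the total-volume bound.)
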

\begin{proof}
  From the evolution of volume under Ricci flow,
  \begin{equation} \label{3.5}
    \frac{d}{dt} \Vol(M, g(t)) =
    - \int_M R_{g(t)} \: \dvol_{g(t)} \le - nE \Vol(M, g(t)). 
  \end{equation}
  It follows that $\Vol(M, g(t)) \ge e^{nE(T-t)} \Vol(M, g(T))$.
  The lower
  Ricci curvature bound, the diameter bound from Corollary \ref{3.3}
  and Bishop-Gromov
  comparison now give
  numbers $v_0, A^\prime$ as in the statement of the lemma.
\end{proof}

\begin{lemma} \label{3.6}
  Let $F$ be a solution of the backward heat equation
  \begin{equation} \label{3.7}
\partial_t F = - \triangle F.
  \end{equation}
Then $\max |F|$ and 
$e^{-2Et} \max |\nabla F|$ are nondecreasing in $t$.
\end{lemma}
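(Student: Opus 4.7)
The plan is to handle the two quantities separately using Hamilton's maximum-principle trick together with the Bochner identity and the Ricci lower bound $\Ric \ge E g$ from hypothesis (2) of Theorem \ref{1.3}. In each case, I would derive a parabolic differential (in)equality for the squared quantity and then evaluate it at a spatial maximum; existence of the maximum is guaranteed because $M$ is closed.

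For $\max |F|$, differentiating $F^2$ and using $\partial_t F = -\triangle F$ gives
\begin{equation*}
\partial_t F^2 \;=\; -2 F \triangle F \;=\; -\triangle(F^2) + 2|\nabla F|^2.
\end{equation*}
At any $t$, choose $x_t \in M$ with $F^2(x_t, t) = \max_M F^2(\cdot, t)$. Then $\triangle(F^2)(x_t, t) \le 0$, so $\partial_t F^2(x_t, t) \ge 2|\nabla F(x_t,t)|^2 \ge 0$. Hamilton's envelope lemma (the spatial maximum of a smooth time-dependent function on a closed manifold is Lipschitz in $t$, differentiable a.e., with derivative equal to $\partial_t$ evaluated at a spatial maximizer) then gives that $\max_M F^2$ is nondecreasing in $t$, hence so is $\max_M |F|$.

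For the gradient, I would combine the Ricci-flow evolution $\partial_t g^{ij} = 2R^{ij}$ with the Bochner identity
\begin{equation*}
\tfrac{1}{2}\triangle |\nabla F|^2 \;=\; |\Hess F|^2 + \langle \nabla F, \nabla \triangle F\rangle + \Ric(\nabla F, \nabla F)
\end{equation*}
to obtain
\begin{align*}
\partial_t |\nabla F|^2 &\;=\; 2\Ric(\nabla F, \nabla F) + 2\langle \nabla F, \nabla \partial_t F\rangle \\
&\;=\; 2\Ric(\nabla F, \nabla F) - 2\langle \nabla F, \nabla \triangle F\rangle \\
&\;=\; -\triangle |\nabla F|^2 + 2|\Hess F|^2 + 4\Ric(\nabla F, \nabla F).
\end{align*}
Discarding the nonnegative Hessian term and using $\Ric \ge E g$ yields the pointwise inequality $\partial_t |\nabla F|^2 \ge -\triangle |\nabla F|^2 + 4E|\nabla F|^2$. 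Applied at a spatial maximizer of $|\nabla F|^2(\cdot, t)$, where the Laplacian term is nonpositive, Hamilton's envelope lemma produces $\frac{d}{dt} \max_M |\nabla F|^2 \ge 4E \max_M |\nabla F|^2$ a.e., i.e., $e^{-4Et} \max_M |\nabla F|^2$ is nondecreasing. Taking square roots gives the claimed monotonicity of $e^{-2Et} \max_M |\nabla F|$.

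I do not anticipate any serious obstacle. The only technicality is the non-smoothness in $t$ of the spatial suprema, addressed by Hamilton's standard envelope lemma. Compactness of $M$ removes boundary issues and ensures that spatial maxima are attained; the Ricci flow background is used only through the evolution of $g^{ij}$ and the Ricci lower bound, both of which enter cleanly.
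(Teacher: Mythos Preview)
Your proposal is correct and follows essentially the same route as the paper: both compute $(\partial_t+\triangle)|\nabla F|^2\ge 4\Ric(\nabla F,\nabla F)\ge 4E|\nabla F|^2$ via the Bochner formula together with the metric evolution $\partial_t g^{ij}=2R^{ij}$, and then invoke the maximum principle. The only cosmetic difference is that the paper phrases the conclusion directly as ``by the maximum principle, $e^{-4Et}\max|\nabla F|^2$ is nondecreasing,'' whereas you spell out Hamilton's envelope argument at a spatial maximizer; these are the same thing.
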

\begin{proof}
  From the maximum principle,
  $\max |F|$ is nondecreasing in $t$.
  Next, we have
  \begin{equation} \label{3.8}
    \partial_t |\nabla F|^2 = 2 \Ric(\nabla F, \nabla F) - 2
    \langle \nabla F, \nabla \triangle F \rangle
  \end{equation}
  and
    \begin{equation} \label{3.9}
      \triangle |\nabla F|^2 = 2 \langle \nabla F, \nabla \triangle F \rangle
      + 2 |\Hess F|^2 + 2 \Ric(\nabla F, \nabla F). 
    \end{equation}
    Hence
      \begin{equation} \label{3.10}
        (\partial_t + \triangle) |\nabla F|^2 \ge 4 \Ric(\nabla F, \nabla F)
        \ge  4 E |\nabla F|^2,
      \end{equation}
      or
            \begin{equation} \label{3.11}
              (\partial_t + \triangle) \left( e^{-4Et} |\nabla F|^2 \right)
              \ge 0.
      \end{equation}
            By the maximum principle, $e^{-4Et}
            \max |\nabla F|^2$ is nondecreasing in $t$.
This proves the lemma.
\end{proof}

\begin{lemma} \label{3.12}
Given $f \in C^\infty(M)$,
there is a function $\alpha : (0, T] \rightarrow \R^+$ with
  $\lim_{t \rightarrow 0} \alpha(t) = 0$ having the
  following property.
Given $\widehat{t} \in (0, T]$,
  let $F$ be the solution to (\ref{3.7}) on $(0, \widehat{t}]$
    with $F(\widehat{t}) = f$. 
If $s \in (0, \widehat{t}/2]$ then $\| F(s) - f \|_\infty
  \le \alpha(\widehat{t})$.
\end{lemma}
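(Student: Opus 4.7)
The plan is to represent $F(s)$ through a heat kernel and then estimate the pointwise difference using a Gaussian kernel bound together with the distance distortion from Lemma \ref{3.1}. Reparametrize by $\tau = \widehat{t} - s$ and set $G(\tau, x) = F(\widehat{t}-\tau, x)$; then $G$ satisfies the forward-in-$\tau$ heat equation $\partial_\tau G = \triangle_{g(\widehat{t}-\tau)} G$ on $[0, \widehat{t})$ with $G(0) = f$, and the desired conclusion becomes $\|G(\tau) - f\|_\infty \le \alpha(\widehat{t})$ for all $\tau \in [\widehat{t}/2, \widehat{t})$.

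Let $K(x, y, \tau)$ be the heat kernel of this equation, normalized so that $G(\tau, x) = \int_M K(x, y, \tau)\, f(y)\, \dvol_{g(\widehat{t})}(y)$. Since the constant function $1$ solves the equation, stochastic completeness $\int_M K(x, y, \tau)\, \dvol_{g(\widehat{t})}(y) = 1$ is automatic, whence
\[
G(\tau, x) - f(x) = \int_M K(x, y, \tau)\bigl(f(y) - f(x)\bigr)\, \dvol_{g(\widehat{t})}(y).
\]
Set $L = \|\nabla_{g(T)} f\|_\infty$, which is finite since $f$ is smooth and $M$ is compact. Applying Lemma \ref{3.1} with $s = \widehat{t}$ and $t = T$ yields $d_{g(T)}(x, y) \le e^{E(\widehat{t}-T)} d_{g(\widehat{t})}(x, y) \le e^{|E|T} d_{g(\widehat{t})}(x, y)$, and hence
\[
|f(y) - f(x)| \le L\, d_{g(T)}(x, y) \le L\, e^{|E|T}\, d_{g(\widehat{t})}(x, y).
\]

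Combining the curvature bound $|\Rm|<A/t$, the Ricci lower bound, and the volume noncollapsing of Lemma \ref{3.4} places us in the setting of the heat kernel estimates of \cite{Bamler-Cabezas-Rivas-Wilking (2019)}, which supply a Gaussian upper bound of schematic form
\[
K(x, y, \tau) \le \frac{C}{V_{g(\widehat{t})}(x, \sqrt{\tau})}\, e^{-c\, d_{g(\widehat{t})}(x, y)^2/\tau}.
\]
Standard integration by spherical shells, together with the lower volume bound of Lemma \ref{3.4}, gives $\int_M K(x, y, \tau)\, d_{g(\widehat{t})}(x, y)\, \dvol_{g(\widehat{t})}(y) \le C'\sqrt{\tau} \le C'\sqrt{\widehat{t}}$, and hence $|G(\tau, x) - f(x)| \le L\, e^{|E|T}\, C'\sqrt{\widehat{t}}$ uniformly in $x$ and $\tau \in [0, \widehat{t})$. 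One then takes $\alpha(\widehat{t}) = C'L\, e^{|E|T}\,\sqrt{\widehat{t}}$, which tends to $0$ as $\widehat{t} \to 0$.

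The main technical obstacle is extracting the Gaussian kernel estimate above in the required form: the kernel $K$ propagates on a time-dependent background in which $g(\widehat{t}-\tau)$ evolves in the reverse Ricci flow direction as $\tau$ grows, whereas BCW's estimates are most naturally stated for forward Ricci flows. The cleanest route is to pass back to the original $t$-variable, view $K$ as the kernel for the backward heat equation on the forward Ricci flow $g(t)$, and extract the Gaussian bound (with volume and distance measured in $g(\widehat{t})$, the metric at the later Ricci flow time) from the BCW estimates together with Lemma \ref{3.4}, ensuring that the constants remain uniform as one approaches the singular time $t \to 0$.
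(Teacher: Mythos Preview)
Your approach is essentially the paper's: represent $F(s)$ via the heat kernel, invoke the Gaussian bound of \cite[Proposition~3.1]{Bamler-Cabezas-Rivas-Wilking (2019)} (this is the paper's (\ref{3.16})), and control $|f(x)-f(y)|$ using the distance distortion of Lemma~\ref{3.1}. The one substantive difference is the final step: the paper splits the integral into a near ball $B_{g(\widehat t)}(y,L\sqrt{\widehat t})$ and its complement, controlling the near part by the Lipschitz bound and the far part by the Gaussian tail via Bishop--Gromov, and then chooses $L=\widehat t^{-1/4}$. You instead bound $|f(x)-f(y)|$ globally by a multiple of $d_{g(\widehat t)}(x,y)$ and integrate a first moment of the kernel. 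This is cleaner and gives the sharper rate $\alpha(\widehat t)\sim\sqrt{\widehat t}$, whereas the paper's split yields only $\widehat t^{1/4}$; neither improvement is needed for the application.

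Two caveats on precision. First, the BCW bound as quoted in the paper has the Gaussian in $d_s$ (the \emph{earlier} Ricci flow time), not $d_{g(\widehat t)}$; you must convert using Lemma~\ref{3.1}, i.e.\ $d_s\ge e^{-|E|\widehat t}d_{\widehat t}$, exactly as the paper does in (\ref{3.20}). Your final paragraph gestures at this but does not carry it out. Second, the spherical-shell integration uses the \emph{upper} volume bound from Bishop--Gromov (via the lower Ricci bound), not the lower volume bound of Lemma~\ref{3.4}; the prefactor $\widehat t^{-n/2}$ in (\ref{3.16}) already absorbs any need for volume noncollapsing, so your invocation of Lemma~\ref{3.4} is unnecessary at that point.
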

\begin{proof}
  Let $G(x,t;y,s)$, $0 < s < t$,
  be the Green's function for (\ref{3.7}), meaning that
  for fixed $(x,t)$, the function $G(x,t;\cdot, \cdot)$ satisfies
  \begin{equation} \label{3.13}
(\partial_s + \triangle_{y,s})G(x,t;\cdot, \cdot) = 0,
  \end{equation}
  and $\lim_{s \rightarrow t} G(x,t;y,s) = \delta_x(y)$.
  Then $G$ is positive and for given $(y,s)$, one has
  \begin{equation} \label{3.14}
    \int_M G(x,t;y,s) \: \dvol_{g(t)}(x) = 1.
  \end{equation}
  Also,
  \begin{equation} \label{3.15}
    F(y,s) = \int_M G(x,\widehat{t};y,s) \: f(x) \: \dvol_{g(\widehat{t})}(x).
    \end{equation}
  
  By \cite[Proposition 3.1]{Bamler-Cabezas-Rivas-Wilking (2019)},
  there is a constant $C = C(n,A) < \infty$
  so that
  \begin{equation} \label{3.16}
G(x,t;y,s) < C t^{- \: \frac{n}{2}} e^{- \: \frac{d_s^2(x,y)}{Ct}}
  \end{equation}
  whenever $s \le \frac{t}{2}$.

  Given $L < \infty$, 
we have
    \begin{align} \label{3.17}
      F(y,s) - f(y) = &
      \int_M G(x,\widehat{t};y,s) \: (f(x) - f(y)) \:
      \dvol_{g(\widehat{t})}(x) \\
      = & \int_{B_{g(\widehat{t})}(y, L \sqrt{t})} G(x,\widehat{t};y,s) \: (f(x) - f(y)) \:
      \dvol_{g(\widehat{t})}(x) + \notag \\
& \int_{M-B_{g(\widehat{t})}(y, L \sqrt{t})} G(x,\widehat{t};y,s) \: (f(x) - f(y)) \:
      \dvol_{g(\widehat{t})}(x), \notag
    \end{align}
    so
    \begin{align} \label{3.18}
      |F(y,s) - f(y)| \le &
      \max_{x \in B_{g(\widehat{t})}(y, L \sqrt{t})} |f(x) - f(y)| + \\
& 2 (\max |f|) 
      \int_{M-B_{g(\widehat{t})}(y, L \sqrt{t})} C
      \widehat{t}^{- \: \frac{n}{2}} e^{- \: \frac{d_s^2(x,y)}{C\widehat{t}}}
      \:
      \dvol_{g(\widehat{t})}(x). \notag
      \end{align}
    From Lemma \ref{3.6},
    \begin{equation} \label{3.19}
      \max_{x \in B_{g(\widehat{t})}(y, L \sqrt{\widehat{t}})} |f(x) - f(y)|
      \le
      L \sqrt{\widehat{t}} \max |\nabla f|_{g(\widehat{t})} \le
      L \sqrt{\widehat{t}}
      e^{2E(\widehat{t}-T)} \max |\nabla f|_{g(T)}.
    \end{equation}
    From (\ref{3.2}), we have
    $d_s^2(x,y) \ge e^{2E(\widehat{t}-s)} d_{\widehat{t}}^2(x,y)
    \ge e^{-2|E|\widehat{t}} d_{\widehat{t}}^2(x,y)$. Then
    Bishop-Gromov comparison gives
    \begin{align} \label{3.20}
&      \int_{M-B_{g(\widehat{t})}(y, L \sqrt{t})} C
      \widehat{t}^{- \: \frac{n}{2}} e^{- \: \frac{d_s^2(x,y)}{C\widehat{t}}}
      \:
      \dvol_{g(\widehat{t})}(x) \le \\
&      C
      \widehat{t}^{- \: \frac{n}{2}} \vol(S^{n-1})
      \int_{L\sqrt{t}}^\infty
      e^{- \: \frac{e^{-2|E|\widehat{t}} r^2}{C \widehat{t}}} \left( \frac{1}{\sqrt{|E|}}
      \sinh(r \sqrt{|E|}) \right)^{n-1} \: dr = \notag \\
      &  C
      \vol(S^{n-1})
      \int_{L}^\infty
      e^{- \: \frac{e^{-2|E|\widehat{t}} u^2}{C}}
      \left( \frac{1}{\sqrt{|E|\widehat{t}}}
      \sinh(u \sqrt{|E| \widehat{t}} \right)^{n-1} \: du \le \notag \\
      &  C
      \vol(S^{n-1})
      \int_{L}^\infty
      e^{- \: \frac{u^2}{2C}}
      \left( 
      \sinh(u) \right)^{n-1} \: du \notag
    \end{align}
for $\widehat{t}$ small.
Taking $L = \widehat{t}^{- \: \frac14}$, the lemma follows
from (\ref{3.18}), (\ref{3.19}) and (\ref{3.20}).
\end{proof}

\begin{lemma} \label{3.21}
  If $F$ is a solution of (\ref{3.7}) then
  \begin{equation} \label{3.22}
    \frac{d}{dt} \int_M F R \: \dvol =
 -   \int_M F \left( R^2 - 2 |\Ric|^2 \right) \: \dvol.
    \end{equation}
  \end{lemma}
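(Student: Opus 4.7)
The plan is a direct computation, combining the backward heat equation for $F$ with the standard Ricci flow evolution equations for the scalar curvature $R$ and the volume form $\dvol = \dvol_{g(t)}$.

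First I would record the three ingredients. Under the Ricci flow $\partial_t g = -2\Ric$, one has the well-known identities
\begin{equation*}
\partial_t R = \triangle R + 2|\Ric|^2, \qquad \partial_t (\dvol) = -R \, \dvol,
\end{equation*}
while by hypothesis $\partial_t F = -\triangle F$. Differentiating under the integral sign,
\begin{equation*}
\frac{d}{dt} \int_M F R \, \dvol = \int_M \Bigl[(\partial_t F) R + F (\partial_t R) + F R \, \partial_t(\dvol)/\dvol\Bigr] \dvol.
\end{equation*}
Substituting the three evolution equations gives
\begin{equation*}
\frac{d}{dt} \int_M F R \, \dvol = \int_M \Bigl[ -R \triangle F + F \triangle R + 2 F |\Ric|^2 - F R^2 \Bigr] \dvol.
\end{equation*}

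The second step is to cancel the Laplacian terms. Since $M$ is compact without boundary, integration by parts yields $\int_M R \triangle F \, \dvol = \int_M F \triangle R \, \dvol$, so the first two terms cancel. What remains is
\begin{equation*}
\frac{d}{dt} \int_M F R \, \dvol = \int_M \bigl( 2 F |\Ric|^2 - F R^2 \bigr) \dvol = - \int_M F \bigl(R^2 - 2|\Ric|^2\bigr) \dvol,
\end{equation*}
which is exactly (\ref{3.22}).

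There is no real obstacle here; the only thing to check is that differentiation under the integral sign is justified, which follows from the smoothness of $g(t)$ and $F$ on $(0,T]$ together with the curvature bound (1) of Theorem \ref{1.3}, giving enough regularity of $R$, $\Ric$, and $F$ to legitimize both the time derivative exchange and the integration by parts. This ensures all integrals in sight are absolutely convergent on any fixed time slice.
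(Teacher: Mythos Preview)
Your proof is correct and is essentially identical to the paper's own argument: both differentiate under the integral, substitute $\partial_t F = -\triangle F$, $\partial_t R = \triangle R + 2|\Ric|^2$, $\partial_t \dvol = -R\,\dvol$, and then use integration by parts on the closed manifold to cancel the Laplacian terms.
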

\begin{proof}
  We have
  \begin{align} \label{3.23}
    \frac{d}{dt} \int_M F R \: \dvol = &
    \int_M \left( \frac{\partial F}{\partial t} R \dvol +
    F \frac{\partial R}{\partial t} \dvol +
    F R \frac{d \dvol}{dt} \right)  \\
    = & 
    \int_M \left( - (\triangle F) R \dvol +
    F (\triangle R + 2 |\Ric|^2) \dvol -
    F R^2 \dvol \right) \notag \\
    = & -   \int_M F \left( R^2 - 2 |\Ric|^2 \right) \: \dvol. \notag
  \end{align}
  This proves the lemma.
  \end{proof}

\begin{lemma} \label{3.24}
  There is a $C^{\prime \prime} < \infty$ such that
  $\| R_{g(t)} \|_{L^1} \le C^{\prime \prime}$ for all $t \in (0, T]$.
\end{lemma}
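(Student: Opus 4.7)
The plan is to proceed in two steps: first to bound the signed integral $\int_M R_{g(t)} \dvol_{g(t)}$ uniformly in $t$, and then to upgrade this to an $L^1$ bound using the pointwise lower bound on $R$ coming from hypothesis (2).

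For the first step, I would apply Lemma \ref{3.21} with the constant test function $F \equiv 1$, which solves the backward heat equation $\partial_t F = - \triangle F$ trivially. Since $\int_M \triangle R \, \dvol = 0$ on the closed manifold $M$, the lemma specializes to
\begin{equation*}
\frac{d}{dt} \int_M R_{g(t)} \dvol_{g(t)} = - \int_M \bigl( R^2 - 2 |\Ric|^2 \bigr) \dvol_{g(t)}.
\end{equation*}
Integrating from $t$ up to $T$ yields
\begin{equation*}
\int_M R_{g(t)} \dvol_{g(t)} = \int_M R_{g(T)} \dvol_{g(T)} + \int_t^T \int_M \bigl( R^2 - 2 |\Ric|^2 \bigr) \dvol_{g(\tau)} \, d\tau,
\end{equation*}
and hypothesis (3) of Theorem \ref{1.3} then bounds the right-hand side uniformly in $t \in (0, T]$ by $|\int_M R_{g(T)} \dvol_{g(T)}| + \| R^2 - 2|\Ric|^2 \|_{L^1((0,T] \times M)}$. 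Call this uniform bound $C_1$.

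For the second step, hypothesis (2) forces $R_{g(t)} \ge nE$ pointwise, so the negative part $R_- := \max(-R, 0)$ satisfies $R_- \le n|E|$ pointwise (and $R_- \equiv 0$ if $E \ge 0$). Meanwhile, combining the diameter bound of Corollary \ref{3.3} with the Ricci lower bound $\Ric \ge Eg$ and the Bishop--Gromov volume comparison theorem (comparison with the model space of constant sectional curvature $E/(n-1)$) yields a uniform upper bound $\Vol(M, g(t)) \le V_0 < \infty$. Writing $|R| = R + 2 R_-$, one obtains
\begin{equation*}
\| R_{g(t)} \|_{L^1} = \int_M R_{g(t)} \dvol_{g(t)} + 2 \int_M R_- \dvol_{g(t)} \le C_1 + 2 n |E| V_0,
\end{equation*}
providing the desired constant $C^{\prime \prime}$.

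I do not anticipate a substantive analytical obstacle: once Lemma \ref{3.21} is in hand, the argument is essentially algebraic. The main conceptual point is that the integrability assumption (3) in Theorem \ref{1.3} is exactly calibrated to make the signed total scalar curvature $\int_M R_{g(t)} \dvol_{g(t)}$ a function of bounded variation on $(0, T]$, via the constant test function $F=1$; the pointwise lower bound on $\Ric$ then does the rest by absorbing any negative part of $R$ into the volume term.
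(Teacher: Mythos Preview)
Your proof is correct and follows essentially the same route as the paper: take $F\equiv 1$ in Lemma~\ref{3.21} to obtain the identity~(\ref{3.25}), use hypothesis~(3) to bound $\int_M R_{g(t)}\,\dvol_{g(t)}$ uniformly, and then use the pointwise lower bound $R\ge nE$ to pass to an $L^1$ bound. The paper's write-up is terser and leaves implicit the volume upper bound (from Corollary~\ref{3.3} and Bishop--Gromov) that you spell out explicitly, but the argument is the same.
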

\begin{proof}
  Taking $F = 1$ in (\ref{3.22}) gives
  \begin{equation} \label{3.25}
    \int_M R_{g(T)} \dvol_{g(T)} -
    \int_M R_{g(\widetilde{t})} \dvol_{g(\widetilde{t})} =
    - \int_{\widetilde{t}}^T \int_M \left( R^2 - 2 |\Ric|^2 \right)(x,t)
    \dvol_{g(t)}(x) dt.
  \end{equation}
  As $\Ric_{g(\widetilde{t})} \ge E g(\widetilde{t})$, it follows that
  $R_{g(\widetilde{t})} \ge nE$. The lemma now follows from (\ref{3.25}).  
  \end{proof}

With the hypotheses of Lemma \ref{3.12}, from (\ref{3.22}) we obtain
\begin{equation} \label{3.26}
  \int_M f R_{g(\widehat{t})} \dvol_{g(\widehat{t})} -
  \int_M F(s) R_{g(s)} \dvol_{g(s)} =
  - \int_{s}^{\widehat{t}} F(t) \left( R^2 - 2 |\Ric|^2 \right)
  \dvol_{g(t)} dt.
  \end{equation}
Now
\begin{align} \label{3.27}
& \left| \int_M f R_{g(\widehat{t})} \dvol_{g(\widehat{t})} -
    \int_M f R_{g(s)} \dvol_{g(s)} \right| \le \\ 
&      \left| \int_M f R_{g(\widehat{t})} \dvol_{g(\widehat{t})} -
      \int_M F(s) R_{g(s)} \dvol_{g(s)} \right| + \notag \\
     & \left| \int_M (F(s)-f) R_{g(s)} \dvol_{g(s)} \right|. \notag
\end{align}
From Lemma \ref{3.6} and (\ref{3.26}),
\begin{equation} \label{3.28}
  \left| \int_M f R_{g(\widehat{t})} \dvol_{g(\widehat{t})} -
  \int_M F(s) R_{g(s)} \dvol_{g(s)} \right| \le
  (\max |f|) \int_0^{\widehat{t}} \left| R^2 - 2 |\Ric|^2 \right|
  \dvol_{g(t)} dt.
\end{equation}
From Lemmas \ref{3.12} and \ref{3.24},
\begin{equation} \label{3.29}
  \left| \int_M (F(s)-f) R_{g(s)} \dvol_{g(s)} \right| \le
  C^{\prime \prime} \alpha(\widehat{t}).
  \end{equation}
Hence
\begin{align} \label{3.30}
&  \left| \int_M f R_{g(\widehat{t})} \dvol_{g(\widehat{t})} -
  \int_M f R_{g(s)} \dvol_{g(s)} \right| \le \\
&  (\max |f|) \int_0^{\widehat{t}} \left| R^2 - 2 |\Ric|^2 \right|
  \dvol_{g(t)} dt + C^{\prime \prime} \alpha(\widehat{t}). \notag
\end{align}

From (\ref{3.30}), the sequence
$\{\int_M f R_{g(2^{-j} T)} \dvol_{g(2^{-j} T)} \}_{j=0}^\infty$ is
a Cauchy sequence and so has a limit $M_f \in \R$.
Then from (\ref{3.30}),
$\lim_{t \rightarrow 0}
\int_M f R_{g(t)} \dvol_{g(t)} = M_f$.

Given $f, f^\prime \in C^\infty(M)$,
we have
\begin{equation} \label{3.31}
\left| \int_M f R_{g(t)} \dvol_{g(t)} -
\int_M f^\prime R_{g(t)} \dvol_{g(t)} \right| \le
C^{\prime \prime} \| f - f^\prime \|_\infty. 
\end{equation}
It follows that the map $f \rightarrow M_f$ extends to a
bounded linear function on $C(M)$, and
so defines a Borel measure $\mu_0$ on $M$.

This proves Theorem \ref{1.3}.

\begin{example} \label{3.32}
In dimension two, $R^2 - 2 |\Ric|^2 = 0$. 
  Let $\Sigma$ be a compact boundaryless two dimensional Alexandrov space.
  It is known that there is a Ricci flow solution $(M, g(t))$, defined for
  an interval $(0, T]$, that satisfies the assumptions of Theorem \ref{1.3}
    and for which $\lim_{t \rightarrow 0} (M, g(t)) \stackrel{GH}{=}
    \Sigma$
    \cite{Richard (2018)}.
    Theorem \ref{1.3} reproduces the canonical curvature measure
    on $\Sigma$, as defined in \cite{Reshetnyak (1993)}.
\end{example}

\begin{remark}
  The scalar curvature measure $\mu_0$ is defined using the Ricci flow.
  We do not know if it just depends on the Gromov-Hausdorff limit
  $\lim_{t \rightarrow 0} (M, g(t))$. There are examples of
  distinct Ricci flows coming out of a cone \cite{Angenent-Knopf (2019)};
  however those Ricci flows do not have a lower bound on the
  Ricci curvature.

  If the time slices $(M, g(t))$ have
  nonnegative curvature operator then by the
  uniqueness result of \cite{Lebedeva-Petrunin (2019)}, the scalar
  curvature measure
  $\mu_0$ agrees with the measure constructed there, and hence only
  depends on the limit space.
  \end{remark}
\section{Proof of Theorem \ref{1.4}}

Note that if $(M,g)$ has $2$-nonnegative curvature operator then it has
nonnegative Ricci curvature.

The next lemma is probably well known;  we give the direct proof.

\begin{lemma} \label{4.1}
  Let $H$ be a finite dimensional real inner product space and let $S$ be a
  symmetric operator on $H$. Let $\lambda_1 \le \lambda_2 \le \ldots$ be the
  eigenvalues of $S$, listed with multiplicity.  If $J$ is a
  $j$-dimensional subspace of $H$, let $P_J$ be orthogonal projection onto $J$.
  Then
  \begin{equation} \label{4.2}
    \Tr \left( P_J S P_J \right) \ge
    \sum_{i=1}^j \lambda_i.
    \end{equation}
  \end{lemma}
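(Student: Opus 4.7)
The plan is to reduce the inequality to a simple rearrangement problem in the eigenbasis of $S$. Let $\{e_1, \ldots, e_n\}$ be an orthonormal eigenbasis with $S e_i = \lambda_i e_i$, and let $\{f_1, \ldots, f_j\}$ be any orthonormal basis of $J$. First I would compute
\begin{equation*}
\Tr(P_J S P_J) = \sum_{k=1}^j \langle S f_k, f_k\rangle
= \sum_{k=1}^j \sum_{i=1}^n \lambda_i \langle f_k, e_i\rangle^2
= \sum_{i=1}^n c_i \lambda_i,
\end{equation*}
where $c_i = \sum_{k=1}^j \langle f_k, e_i\rangle^2 = \|P_J e_i\|^2$.

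The key structural observation is that the coefficients $c_i$ satisfy two constraints: $0 \le c_i \le 1$ for each $i$ (since $P_J$ is a projection, hence $\|P_J e_i\| \le \|e_i\| = 1$), and $\sum_i c_i = \Tr(P_J) = j$. So the proof reduces to showing that the minimum of $\sum_i c_i \lambda_i$ over such $(c_i)$ equals $\sum_{i=1}^j \lambda_i$.

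This minimization I would handle by a swap argument. Suppose the minimum were attained at some $(c_i)$ other than $c_1 = \cdots = c_j = 1$, $c_{j+1} = \cdots = c_n = 0$. Then there exist indices $p \le j < q$ with $c_p < 1$ and $c_q > 0$. Setting $\epsilon = \min(1 - c_p, c_q) > 0$, replace $(c_p, c_q)$ by $(c_p + \epsilon, c_q - \epsilon)$; this preserves the constraints and changes the sum by $\epsilon(\lambda_p - \lambda_q) \le 0$ since $\lambda_p \le \lambda_q$. Iterating this transfer (or invoking compactness to pick a minimizer and observing the argument forces $c_i = 1$ for $i \le j$), the minimum is attained at $c_i = \mathbf{1}_{\{i \le j\}}$, giving
\begin{equation*}
\Tr(P_J S P_J) = \sum_{i=1}^n c_i \lambda_i \ge \sum_{i=1}^j \lambda_i.
\end{equation*}
The only mild subtlety is making the swap argument rigorous — one can either proceed by induction on the number of indices at which $(c_i)$ differs from the extremal pattern, or simply observe that the objective is linear on the polytope $\{0 \le c_i \le 1, \sum c_i = j\}$ so its minimum is attained at a vertex, and every vertex has $c_i \in \{0,1\}$ with exactly $j$ ones; monotonicity of $(\lambda_i)$ then singles out the desired vertex.
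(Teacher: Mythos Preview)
Your proof is correct. You express $\Tr(P_J S P_J)$ in the eigenbasis of $S$ as $\sum_i c_i \lambda_i$ with $c_i = \|P_J e_i\|^2$, observe that $(c_i)$ lies in the hypersimplex $\{0\le c_i\le 1,\ \sum_i c_i = j\}$, and minimize a linear functional over that polytope (either by a swap/transfer argument or by noting the minimum is at a vertex). All steps are sound; this is essentially the standard proof of the Ky Fan eigenvalue inequality.

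The paper's argument takes a different route. Instead of passing to coordinates in the eigenbasis, it works directly on the Grassmannian: by compactness there is a minimizing subspace $J$, and then varying $J$ along one-parameter subgroups of $O(H)$ gives the first-order condition $\Tr(\eta[P_J,S])=0$ for all skew $\eta$, forcing $[P_J,S]=0$. Thus any minimizer is a sum of subspaces of eigenspaces, after which the result is immediate. Your approach is more elementary and purely combinatorial/convex; the paper's is more geometric and has the side benefit of identifying the structure of minimizers intrinsically (they commute with $S$) without ever choosing an eigenbasis. Either proof is adequate for the lemma's use in the paper.
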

\begin{proof}
  By continuity and the compactness of the Grassmannian of $j$-planes in $H$,
  there is some $J$ that minimizes the left-hand side of
  (\ref{4.2}). Suppose that $J$ is a minimizer.  If $U \in O(H)$
  then $P_{UJ} = UP_JU^{-1}$, so
  $\Tr \left( S P_J \right) \le \Tr \left( S UP_JU^{-1} \right)$.
  Considering one-parameter subgroups of $O(H)$, it follows that
  \begin{equation} \label{4.3}
    0 = \Tr \left( S [\eta, P_J] \right) =
    \Tr \left( \eta [P_J, S] \right)
  \end{equation}
  for all skew-symmetric $\eta$.  As $[P_J, S]$ is skew-symmetric, it
  must vanish. If
  $H = \bigoplus_k H_k$ is the spectral decomposition of $S$ into
  eigenspaces of distinct eigenvalue then
  we must have $J = \bigoplus_k W_k$, where $W_k \subset H_k$.
  Considering $J$'s just of this form and minimizing
  $\Tr \left( P_J S P_J \right)$, the lemma follows.
  \end{proof}

\begin{lemma} \label{4.4}
  Suppose that
  \begin{enumerate}
\item    $n = 3$ and $(M,g)$ has nonnegative sectional curvature, or
\item    $n \ge 4$ and $(M,g)$ has 2-nonnegative curvature operator.
  \end{enumerate}
  Then $R^2 - 2 |\Ric|^2 \ge 0$
\end{lemma}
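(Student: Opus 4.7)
The plan is to reduce the scalar inequality to two bounds on the Ricci eigenvalues that both follow from Lemma \ref{4.1}. At a given point, let $\lambda_1 \le \cdots \le \lambda_n$ be the eigenvalues of $\Ric$ and $\mu_1 \le \cdots \le \mu_N$ those of the curvature operator $\hat R$ acting on $\Lambda^2 T_p M$, where $N = n(n-1)/2$. Since $R = \sum_i \lambda_i$ and $|\Ric|^2 = \sum_i \lambda_i^2$, one has the elementary identity
\begin{equation*}
R^2 - 2|\Ric|^2 = \sum_{i=1}^n \lambda_i (R - 2\lambda_i),
\end{equation*}
so it suffices to prove (i) $\lambda_i \ge 0$ for every $i$ and (ii) $\lambda_n \le R/2$. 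Under these two conclusions, $R - 2\lambda_i \ge R - 2\lambda_n \ge 0$ for each $i$, and every summand on the right is manifestly nonnegative.

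To establish (i) and (ii), I would work in an orthonormal frame $\{e_i\}$ of $T_pM$ diagonalizing the Ricci tensor at the point. For each index $i$, the subspace $V_i = \spann\{e_i \wedge e_j : j \ne i\} \subset \Lambda^2 T_pM$ has dimension $n-1$ and satisfies $\lambda_i = \Tr(P_{V_i} \hat R P_{V_i})$. Applying Lemma \ref{4.1} to $\hat R$ on $V_i$ produces the lower bound $\lambda_i \ge \sum_{a=1}^{n-1} \mu_a$; applying it instead to $-\hat R$ on $V_n$ (whose eigenvalues are $-\mu_N \le \cdots \le -\mu_1$) yields $\lambda_n \le \sum_{a=N-n+2}^N \mu_a$. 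Combined with $R/2 = \sum_a \mu_a$, these two inequalities give
\begin{equation*}
\lambda_i \ge \sum_{a=1}^{n-1} \mu_a, \qquad R/2 - \lambda_n \ge \sum_{a=1}^{N-n+1} \mu_a.
\end{equation*}

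It remains to check that the partial sums on the right are nonnegative under the hypotheses. For $n \ge 4$ with $2$-nonnegative curvature operator, $\mu_a \ge 0$ for every $a \ge 2$: this is immediate if $\mu_1 \ge 0$, and if $\mu_1 < 0$ then $\mu_2 \ge -\mu_1 > 0$ by the $2$-nonnegativity assumption, while $\mu_a \ge \mu_2$ for all $a \ge 2$. Combined with $\mu_1 + \mu_2 \ge 0$, any partial sum $\mu_1 + \cdots + \mu_k$ with $k \ge 2$ is nonnegative. Since $n-1 \ge 3$ and $N-n+1 = (n-1)(n-2)/2 \ge 3$ when $n \ge 4$, both of the required sums qualify. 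In the case $n = 3$, every element of $\Lambda^2 T_pM$ is a simple $2$-vector, so nonnegative sectional curvature is equivalent to nonnegativity of the entire curvature operator; then each $\mu_a \ge 0$, which is more than enough to run the same argument.

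The main conceptual step is noticing the reduction through the identity for $R^2 - 2|\Ric|^2$ and then realizing that Lemma \ref{4.1} should be invoked both for $\hat R$ (to bound Ricci eigenvalues from below) and for $-\hat R$ (to bound them from above by $R/2$). After this, the verification is elementary bookkeeping with the eigenvalues of the curvature operator, and no further analytic input is needed.
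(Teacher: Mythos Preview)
Your proof is correct and follows essentially the same route as the paper: both use the identity $R^2 - 2|\Ric|^2 = \sum_i \lambda_i(R-2\lambda_i)$ in a Ricci-diagonalizing frame and then invoke Lemma~\ref{4.1} to show each factor is nonnegative. The only cosmetic difference is that the paper applies Lemma~\ref{4.1} to $\hat R$ on the complementary subspace $V_i^\perp = \spann\{e_j \wedge e_k : j,k \neq i\}$ to obtain $R-2\lambda_i \ge 2\sum_{a=1}^{\binom{n-1}{2}} \mu_a$ directly, whereas you reach the equivalent bound by applying the lemma to $-\hat R$ on $V_n$ and subtracting from $\Tr \hat R$.
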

\begin{proof}
  We have
  \begin{equation} \label{4.5}
    R^2 - 2 |\Ric|^2 = \Tr \left( \Ric (Rg-2\Ric) \right).
  \end{equation}
  Let $\{e_i\}$ be an orthonormal basis of $T_mM$ that diagonalizes
  $\Ric$.  Then
  \begin{equation} \label{4.6}
    R^2 - 2 |\Ric|^2 = \sum_i R_{ii} (Rg-2\Ric)_{ii}. 
  \end{equation}
  Now
  \begin{equation} \label{4.7}
    (Rg-2\Ric)_{ii} = R - 2 \Ric_{ii} =
    \sum_{j,k} R_{jkjk} - 2 \sum_{j} R_{ijij} =
    \sum_{j,k \neq i} R_{jkjk}.
    \end{equation}
  If $(M,g)$ has nonnegative sectional curvature then
  the last term in (\ref{4.7}) is clearly nonnegative.  Suppose
  that $(M,g)$ has
  $2$-nonnegative curvature operator. The last term in (\ref{4.7}) is
  \begin{equation} \label{4.8}
    \sum_{j,k \neq i} R_{jkjk} = 2 \sum_{\substack{j,k \neq i \\ j < k}}
    \langle e_j \wedge e_k, {\mathcal R}(e_j \wedge e_k) \rangle.
  \end{equation}
  We apply Lemma \ref{4.1} with $H = \Lambda^2(T_mM)$ and
  $J = \spann \{ e_j \wedge e_k\}_{\substack{j,k \neq i \\ j < k}}$.
    We conclude that
    \begin{equation} \label{4.9}
      \sum_{\substack{j,k \neq i \\ j < k}}
      \langle e_j \wedge e_k, {\mathcal R}(e_j \wedge e_k) \rangle \ge
      \sum_{l=1}^{\binom{n-1}{2}} \lambda_l.
    \end{equation}
    If $n \ge 4$ then $\binom{n-1}{2} \ge 2$ and
    $\sum_{l=1}^{\binom{n-1}{2}} \lambda_l \ge
    \lambda_1 + \lambda_2 \ge 0$.  This proves the lemma.
  \end{proof}

\begin{corollary} \label{4.10}
  Let $(M, g(t))$, $t \in (0, T]$, be a Ricci flow solution on a compact
    $n$-dimensional manifold $M$.
    Suppose that
    \begin{enumerate}
    \item  $|\Rm_{g(t)}| < \frac{A}{t}$,
\item $n = 3$ and each $(M, g(t))$ has
    nonnegative sectional curvature, or $n \ge 4$ and each $(M, g(t))$ has
    $2$-nonnegative curvature operator, and
  \item There is a uniform upper bound on $\int_M R_{g(t)} \dvol_{g(t)}$.
    \end{enumerate}
Then there is a limit $\lim_{t \rightarrow 0} R_{g(t)} \dvol_{g(t)} =
\mu_0$ in the weak-$\star$ topology.
\end{corollary}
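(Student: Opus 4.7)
The plan is to verify the three hypotheses of Theorem \ref{1.3} and then invoke that theorem directly. Hypothesis (1) of Theorem \ref{1.3} coincides with hypothesis (1) of the corollary. For hypothesis (2), observe that in either case listed in (2) of the corollary, the curvature condition forces $\Ric_{g(t)} \ge 0$: in dimension three nonnegative sectional curvature obviously does so, and as remarked at the start of Section 4, $2$-nonnegative curvature operator also implies $\Ric \ge 0$. Thus we may take $E = 0$.

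The real content is verifying hypothesis (3), that is, that $R^2 - 2|\Ric|^2 \in L^1((0,T] \times M; dt \, \dvol_{g(t)})$. Here the key input is Lemma \ref{4.4}, which tells us that under the curvature assumption (2) of the corollary, the integrand is pointwise nonnegative: $R^2 - 2|\Ric|^2 \ge 0$. With this sign in hand, equation (\ref{3.25}) (applied with $F \equiv 1$) reads
\begin{equation*}
\int_{\widetilde{t}}^T \int_M \left( R^2 - 2 |\Ric|^2 \right) \dvol_{g(t)} \, dt
= \int_M R_{g(\widetilde{t})} \dvol_{g(\widetilde{t})} - \int_M R_{g(T)} \dvol_{g(T)}.
\end{equation*}
The right-hand side is bounded above by the uniform upper bound on $\int_M R_{g(t)} \dvol_{g(t)}$ from hypothesis (3) of the corollary, and is independent of $\widetilde{t}$. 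Since the integrand on the left is nonnegative, the monotone convergence theorem lets me send $\widetilde{t} \to 0$ and conclude $\int_0^T \int_M \left( R^2 - 2 |\Ric|^2 \right) \dvol_{g(t)} \, dt < \infty$, giving hypothesis (3) of Theorem \ref{1.3}.

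Applying Theorem \ref{1.3} then yields the asserted weak-$\star$ limit $\lim_{t \rightarrow 0} R_{g(t)} \dvol_{g(t)} = \mu_0$. There is no serious obstacle here: the entire argument is a bookkeeping exercise in which the pointwise nonnegativity provided by Lemma \ref{4.4} is what converts the a priori signed identity (\ref{3.25}) into the required integrability. The role of the curvature-positivity hypothesis in the corollary is precisely to make Lemma \ref{4.4} applicable, and the role of the total scalar curvature bound is precisely to bound the right-hand side of (\ref{3.25}) uniformly in $\widetilde{t}$.
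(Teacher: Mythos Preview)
The proposal is correct and follows essentially the same approach as the paper: use Lemma \ref{4.4} together with (\ref{3.25}) to verify the $L^1$ hypothesis of Theorem \ref{1.3}, then apply that theorem. Your write-up is more detailed---in particular you spell out that the curvature assumption gives $\Ric \ge 0$ so that $E=0$ works, and you invoke monotone convergence explicitly---but the substance is identical to the paper's two-line argument.
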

\begin{proof}
  From Lemma \ref{4.4} and (\ref{3.25}), it follows that
  $R^2 - 2|\Ric|^2 \in L^1((0, T] \times M; dt \dvol_{g(t)})$. The
    corollary now follows from Theorem \ref{1.3}.
  \end{proof}

We now prove Theorem \ref{1.4}.
  There is a uniform existence time $[0,T]$ for the Ricci flow
  solutions $(M_i, g_i(t))$ with initial condition $g_i(0) = g_i$
  \cite{Bamler-Cabezas-Rivas-Wilking (2019)}.
  The flows have $2$-nonnegative curvature operator and satisfy
  $|\Rm_{g_i(t)}| < \frac{A}{t}$ for some $A < \infty$.
  By (\ref{3.25}) and Lemma \ref{4.4},
  $\int_{M_i} R_{g_i(t)} \: \dvol_{g_i(t)} \le \widehat{A}$.
  By Cheeger-Hamilton compactness, after passing to a subsequence there are
  \begin{enumerate}
  \item  A smooth manifold $X_\infty$,
  \item A Ricci flow solution $(X_\infty, g_\infty(t))$ defined for
    $t \in (0, T]$ and
    \item Diffeomorphisms $\phi_i : X_\infty
      \rightarrow M_i$
  \end{enumerate}
  so that for any $[a,b] \subset (0, T)$,
  $\lim_{i \rightarrow} \phi_i^* g_i = g_\infty$ smoothly on $[a,b]
  \times X_\infty$. Then $g_\infty(t)$ has $2$-nonnegative curvature
  operator, and satisfies $|\Rm_{g_\infty(t)}| < \frac{A}{t}$ and
  $\int_{X_\infty} R_{g_\infty(t)} \: \dvol_{g_\infty(t)} \le \widehat{A}$,
  for all $t \in (0, T]$.
  From Lemma \ref{3.1}, there is a Gromov-Hausdorff limit
  $\lim_{t \rightarrow 0} (X_\infty, g_\infty(t)) =
  (X_\infty, d_\infty)$.
  By Corollary \ref{4.10}, there is a limit
  $\lim_{t \rightarrow 0} R_{g_\infty(t)} \dvol_{g_\infty(t)} =
  \mu_0$ in the weak-$\star$ topology.

  The proof of Theorem \ref{1.3} can be made
  uniform in the underlying geometry.  Hence given $f \in C(X_\infty)$,
  it follows that
\begin{align} \label{4.13}
&  \lim_{i \rightarrow \infty} \int_{X_\infty} f \:
  \left( \phi_i^{-1} \right)_*
  \left( R_{g_i(t)} \dvol_{g_i(t)} \right) = \\
&     \lim_{i \rightarrow \infty} \int_{X_\infty} f \:
     R_{\phi_i^* g_i(t)} \dvol_{\phi_i^* g_i(t)} =
  \int_{X_\infty} f \:
  R_{g_\infty(t)} \dvol_{g_\infty(t)}, \notag
\end{align}
uniformly in $i$ and $t \in (0,T]$. 
Thus
\begin{equation} \label{4.14}
\lim_{i \rightarrow \infty} \int_{X_\infty} f \:
  \left( \phi_i^{-1} \right)_*
  \left( R_{g_i} \dvol_{g_i} \right) =
  \int_{X_\infty} f \: d\mu_0,
\end{equation}
which means that $\lim_{i \rightarrow \infty}
\left( \phi_i^{-1} \right)_*
  \left( R_{g_i} \dvol_{g_i} \right) \stackrel{weak-\star}{=} \mu_0$.

\begin{remark} \label{4.11}
There is a conjecture that
for any $n \in \Z^+$ and $v > 0$, there is some
  $\widehat{A} = \widehat{A}(n,v) < \infty$ so that if
$(M,g)$ is a complete $n$-dimensional Riemannian manifold
with $\Ric \ge - (n-1)g$, and $B(m,1)$ is a
unit ball in $M$ with
$\vol(B(m,1)) \ge v$,
  then $\int_{B(m,1)} R \: \dvol_g \le \widehat{A}$.
  This conjecture is known to be true if $M$ is a polarized
  K\"ahler manifold \cite[Proposition 1.7]{Liu-Szekelyhidi (2018)}
  or if $M$ has sectional
  curvature bounded below by $-1$ \cite{Petrunin (2009)}. 
  If the conjecture holds then condition (4) in Theorem \ref{1.4} follows
  from the first three conditions.
\end{remark}

       {\bf Addendum to ``On scalar curvature lower bounds and scalar curvature measure'' about (3.19)}

From Lemma 3.1, if $d_{\widehat{t}}(x,y) < L \sqrt{\widehat{t}}$ then
$d_T(x,y) < L  \sqrt{\widehat{t}} e^{E(\widehat{t}-T)}$. As
$$|f(x) - f(y)| \le d_T(x,y) \max |\nabla f|_{g(T)},$$
we obtain
$$    \max_{x \in B_{g(\widehat{t})}(y, L \sqrt{\widehat{t}})} |f(x) - f(y)|
      \le
      L \sqrt{\widehat{t}}
      e^{E(\widehat{t}-T)} \max |\nabla f|_{g(T)}.
$$
    The rest of the proof follows as in the paper. 
\end{document}